\theoremstyle{plain}
\newtheorem{theo}{Theorem}[section]
\newtheorem{prop}[theo]{Proposition}
\newtheorem{lemm}[theo]{Lemma}
\newtheorem{coro}[theo]{Corollary}
\theoremstyle{definition}
\theoremstyle{remark}
\newtheorem*{rema}{Remark}
\numberwithin{equation}{section}
\newcommand{\mb}[1]{{\textbf {\textit#1}}}
\newcommand{\field}[1]{\mathbb{#1}}
\def\C{\field{C}}
\def\k{\mathbf{k}}
\newcommand{\R}{\field{R}}
\DeclareMathOperator{\Tor}{Tor}
\def\le{\leqslant}
\def\ge{\geqslant}
\def\As{\mbox{\it As}}
\newcommand{\zp}{\mathcal Z_P}
\DeclareMathOperator{\vc}{\mbox{\textit{vc}}}
\begin{document}

\title{Bigraded Betti numbers of some simple polytopes}

\author{Ivan Limonchenko}

\thanks{The work was supported by
grant MD-2253.2011.1 from the President of Russia.}

\address{Department of Geometry and Topology,
Faculty of Mathematics and Mechanics, Moscow State University,
Leninskiye Gory, Moscow 119992, Russia}
\email{iylim@mail.ru}


\begin{abstract}
The bigraded Betti numbers $\beta^{-i,2j}(P)$ of a simple polytope
$P$ are the dimensions of the bigraded components of the Tor
groups of the face ring $\k[P]$. The numbers $\beta^{-i,2j}(P)$
reflect the combinatorial structure of $P$ as well as the topology
of the corresponding moment-angle manifold~$\zp$, and therefore
they find numerous applications in combinatorial commutative
algebra and toric topology. Here we calculate some bigraded Betti
numbers of the type $\beta^{-i,2(i+1)}$ for associahedra, and
relate the calculation of the bigraded Betti numbers for
truncation polytopes to the topology of their moment-angle
manifolds. These two series of simple polytopes provide
conjectural extrema for the values of $\beta^{-i,2j}(P)$ among all
simple polytopes~$P$ with the fixed dimension and number of
facets.
\end{abstract}

\maketitle

\section{Introduction}

We consider \emph{simple convex $n$-dimensional polytopes} $P$ in
the Euclidean space $\R^n$ with scalar product
$\langle\;,\:\rangle$. Such a polytope $P$ can be defined as an
intersection of $m$ halfspaces:
\begin{equation}\label{ptope}
  P=\bigl\{\mb x\in\R^n\colon\langle\mb a_i,\mb
  x\rangle+b_i\ge0\quad\text{for }
  i=1,\ldots,m\bigr\},
\end{equation}
where $\mb a_i\in\R^n$, $b_i\in\R$. We assume that the hyperplanes
defined by the equations $\langle\mb a_i,\mb x\rangle+b_i=0$ are
in general position, that is, at most $n$ of them meet at a single
point. We also assume that there are no redundant inequalities
in~\eqref{ptope}, that is, no inequality can be removed
from~\eqref{ptope} without changing~$P$. Then $P$ has exactly $m$
\emph{facets} given by
$$
  F_i=\bigl\{\mb x\in P\colon\langle\mb a_i,\mb
  x\rangle+b_i=0\bigr\},\quad\text{for } i=1,\ldots,m.
$$

Let $A_P$ be the $m\times n$ matrix of row vectors $\mb a_i$, and
let $\mb b_P$ be the column vector of scalars $b_i\in\R$. Then we
can write~\eqref{ptope} as
\[
  P=\bigl\{\mb x\in\R^n\colon A_P\mb x+\mb b_P\ge\mathbf 0\},
\]
and consider the affine map
\[
  i_P\colon \R^n\to\R^m,\quad i_P(\mb x)=A_P\mb x+\mb b_P.
\]
It embeds $P$ into
\[
  \R^m_\ge=\{\mb y\in\R^m\colon y_i\ge0\quad\text{for }
  i=1,\ldots,m\}.
\]

Following \cite[Constr. 7.8]{B-P}, we define the space $\mathcal Z_P$
from the commutative diagram
\begin{equation}\label{cdiz}
\begin{CD}
  \mathcal Z_P @>i_Z>>\C^m\\
  @VVV\hspace{-0.2em} @VV\mu V @.\\
  P @>i_P>> \R^m_\ge
\end{CD}
\end{equation}
where $\mu(z_1,\ldots,z_m)=(|z_1|^2,\ldots,|z_m|^2)$. The latter
map may be thought of as the quotient map for the coordinatewise
action of the standard torus
\[
  \mathbb T^m=\{\mb z\in\C^m\colon|z_i|=1\quad\text{for }i=1,\ldots,m\}
\]
on~$\C^m$. Therefore, $\mathbb T^m$ acts on $\zp$ with quotient
$P$, and $i_Z$ is a $\mathbb T^m$-equivariant embedding.

By \cite[Lemma 7.2]{B-P}, $\mathcal Z_P$ is a smooth manifold of
dimension $m+n$, called the \emph{moment-angle manifold}
corresponding to~$P$.

Denote by $K_P$ the boundary $\partial P^*$ of the dual simplicial
polytope. It can be viewed as a simplicial complex on the set
$[m]=\{1,\ldots,m\}$, whose simplices are subsets
$\{i_1,\ldots,i_k\}$ such that $F_{i_1}\cap\ldots\cap
F_{i_k}\ne\varnothing$ in~$P$.

Let $\k$ be a field, let $\k[v_{1},\ldots,v_{m}]$ be the graded
polynomial algebra on $m$ variables, $\deg(v_{i})=2$, and let
$\Lambda[u_{1},\ldots,u_{m}]$ be the exterior algebra,
$\deg(u_{i})=1$. The \emph{face ring} (also known as the
\emph{Stanley--Reisner} \emph{ring}) of a simplicial complex $K$
on $[m]$ is the quotient ring
$$
   \k[K]=\k[v_{1},\ldots,v_{m}]/\mathcal I_K
$$
where $\mathcal I_K$ is the ideal generated by those square free
monomials $v_{i_{1}}\cdots{v_{i_{k}}}$ for which
$\{i_{1},\ldots,i_{k}\}$ is not a simplex in $K$. We refer to
$\mathcal I_{K}$ as the \emph{Stanley--Reisner ideal} of~$K$.

Note that $\k[K]$ is a module over $\k[v_{1},\ldots,{v_{m}}]$ via
the quotient projection. The dimensions of the bigraded components
of the $\Tor$-groups,
$$
  \beta^{-i,2j}(K):=\dim_{\k}\Tor^{-i,
  2j}_{\k[v_{1},\ldots,v_{m}]}\bigl(\k[K],\k\bigr),\quad
   0\le{i,j}\le{m},
$$
are known as the \emph{bigraded Betti numbers} of~$\k[K]$,
see~\cite{S} and \cite[\S3.3]{B-P}. They are important invariants
of the combinatorial structure of $K$. We denote
\[
  \beta^{-i,2j}(P):=\beta^{-i,2j}(K_{P}).
\]
The $\Tor$-groups and the bigraded Betti numbers acquire a
topological interpretation by means of the following result on the
cohomology of $\mathcal Z_P$:

\begin{theo}[{\cite[Theorem 8.6]{B-P} or \cite[Theorem 4.7]{P}}]\label{zkcoh}
The cohomology algebra of the moment-angle manifold $\zp$ is given
by the isomorphisms
\[
\begin{aligned}
  H^*(\zp;\k)&\cong\Tor_{\k[v_1,\ldots,v_m]}(\k[K_ P],\k)\\
  &\cong H\bigl[\Lambda[u_1,\ldots,u_m]\otimes \k[K_ P],d\bigr],
\end{aligned}
\]
where the latter algebra is the cohomology of the differential
bigraded algebra whose bigrading and differential are defined by
\[
  \mathop{\mathrm{bideg}} u_i=(-1,2),\;\mathop{\mathrm{bideg}} v_i=(0,2);\quad
  du_i=v_i,\;dv_i=0.
\]
\end{theo}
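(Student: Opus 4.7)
The plan is to establish the two isomorphisms of the theorem in turn: the second one is purely algebraic, while the first is topological and proceeds through the Borel construction for the $\mathbb T^m$-action on $\zp$.

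For the second isomorphism, I would observe that the differential bigraded algebra $\Lambda[u_1,\ldots,u_m]\otimes\k[v_1,\ldots,v_m]$ with $du_i=v_i$ is the classical Koszul complex, which is a free bigraded resolution of $\k$ as a module over $\k[v_1,\ldots,v_m]$. Tensoring this resolution over $\k[v_1,\ldots,v_m]$ with the face ring $\k[K_P]$ computes $\Tor_{\k[v_1,\ldots,v_m]}(\k[K_P],\k)$ by definition, and the resulting complex is precisely $\Lambda[u_1,\ldots,u_m]\otimes\k[K_P]$ with the induced differential $du_i=v_i$, $dv_i=0$, since tensoring replaces the free factor $\k[v_1,\ldots,v_m]$ by $\k[K_P]$.

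For the first isomorphism, I would first identify $\zp$ with the polyhedral product $(D^2,S^1)^{K_P}$: diagram \eqref{cdiz} exhibits $\zp$ as the preimage of $i_P(P)\subset\R^m_\ge$ under the moment map, and the facial stratification of $P$ induces precisely the polyhedral product decomposition indexed by simplices of $K_P$. Then I would pass to the Borel construction $E\mathbb T^m\times_{\mathbb T^m}\zp$. A factor-by-factor computation shows that the $S^1$-Borel construction of the pair $(D^2,S^1)$ is homotopy equivalent to $(\C P^\infty,\mathrm{pt})$, so the Borel construction of $\zp$ is homotopy equivalent to the Davis--Januszkiewicz space $DJ(K_P)=(\C P^\infty,\mathrm{pt})^{K_P}$. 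A direct cellular computation then identifies $H^*(DJ(K_P);\k)\cong\k[K_P]$ as graded algebras, with the Stanley--Reisner relations arising from the simplicial structure of $K_P$.

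Finally, I would apply the Eilenberg--Moore spectral sequence to the fibration $\zp\hookrightarrow E\mathbb T^m\times_{\mathbb T^m}\zp\to B\mathbb T^m$. Its $E_2$-term is $\Tor^{*,*}_{H^*(B\mathbb T^m;\k)}(H^*(DJ(K_P);\k),\k)$, converging to $H^*(\zp;\k)$, and substituting $H^*(B\mathbb T^m;\k)=\k[v_1,\ldots,v_m]$ yields exactly $\Tor_{\k[v_1,\ldots,v_m]}(\k[K_P],\k)$. The main obstacle is to promote this from an isomorphism of graded vector spaces to a genuine algebra isomorphism and to ensure collapse at $E_2$; I would handle this either by appealing to formality of $DJ(K_P)$, or, more concretely, by constructing a $\mathbb T^m$-invariant cell decomposition of $\zp$ whose cellular cochain complex is quasi-isomorphic to $\Lambda[u_1,\ldots,u_m]\otimes\k[K_P]$ with the prescribed differential, giving the required multiplicative isomorphism directly without reference to the spectral sequence.
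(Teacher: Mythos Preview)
The paper does not prove this theorem at all: it is quoted as a background result with a citation to \cite[Theorem~8.6]{B-P} and \cite[Theorem~4.7]{P}, and no argument is given. So there is nothing in the present paper to compare your proposal against.

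That said, your sketch is essentially the standard proof found in those references. The Koszul-resolution argument for the second isomorphism is exactly what is done there, and for the first isomorphism Buchstaber--Panov and Panov proceed, as you do, through the homotopy equivalence of the Borel construction $E\mathbb T^m\times_{\mathbb T^m}\zp$ with the Davis--Januszkiewicz space and then invoke the Eilenberg--Moore spectral sequence. Your caveat about the multiplicative structure and $E_2$-collapse is well placed: in the cited sources this is handled precisely by the second route you mention, namely an explicit cellular (or simplicial) cochain model for $\zp$ quasi-isomorphic to $\Lambda[u_1,\ldots,u_m]\otimes\k[K_P]$, which bypasses the convergence and extension issues of the spectral sequence. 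So your proposal is correct and aligned with the literature, but be aware that within this paper the result is simply imported, not reproved.
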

Therefore, cohomology of $\zp$ acquires a bigrading and the
topological Betti numbers $b^{q}(\zp)=\dim_{k}H^{q}(\zp;\k)$
satisfy
\begin{equation}\label{bp}
  b^{q}(\zp)=\sum\limits_{-i+2j=q}\beta^{-i,2j}(P).
\end{equation}

Poincar\'e duality in cohomology of $\zp$ respects the bigrading:
\begin{theo}[{\cite[Theorem 8.18]{B-P}}]\label{Poinc}
The following formula holds:
$$
   \beta^{-i,2j}(P)=\beta^{-(m-n)+i,2(m-j)}(P).
$$
\end{theo}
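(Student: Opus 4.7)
\emph{Proof plan.} The approach is to lift the classical Poincar\'e duality on the closed oriented $(m+n)$-manifold $\zp$ to a bigraded statement using the model of Theorem~\ref{zkcoh}. Under that theorem, $H^*(\zp;\k)$ carries a bigrading whose cup product is bigraded multiplicative, sending $H^{-i_1,2j_1}(\zp) \otimes H^{-i_2,2j_2}(\zp)$ into $H^{-(i_1+i_2),\,2(j_1+j_2)}(\zp)$. Since $\zp$ is a closed connected orientable smooth manifold of real dimension $m+n$, ordinary Poincar\'e duality gives a perfect cup product pairing
\[
H^q(\zp;\k) \otimes H^{m+n-q}(\zp;\k) \to H^{m+n}(\zp;\k) \cong \k
\]
for each $q$, and the plan is to split this pairing along the bigrading.

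The first substantive step is to show that $H^{m+n}(\zp;\k)$ is concentrated in the single bidegree $(-(m-n),2m)$. Using Hochster's formula $\beta^{-i,2j}(K_P) = \sum_{|J|=j} \dim_\k \widetilde{H}_{j-i-1}(K_P|_J;\k)$ (see~\cite[\S3.3]{B-P}), I would observe that since $K_P = \partial P^*$ is a simplicial $(n-1)$-sphere, every full subcomplex has dimension at most $n-1$, so $\widetilde{H}_d$ vanishes for $d \ge n$. Combined with $j \le m$, this forces any nonzero bidegree with $-i+2j = m+n$ to satisfy $j = m$ and hence $i = m-n$; for that bidegree the formula yields $\beta^{-(m-n),2m}(P) = \dim_\k \widetilde{H}_{n-1}(K_P;\k) = 1$.

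Once concentration is in hand, bigraded multiplicativity block-decomposes the Poincar\'e pairing: a product in $H^{-i,2j}(\zp) \otimes H^{-i',2j'}(\zp)$ contributes to the one-dimensional target $H^{-(m-n),2m}(\zp)$ only when $(i+i',\,j+j') = (m-n,\,m)$. Therefore the pairing on $H^q \otimes H^{m+n-q}$ is the orthogonal direct sum of bigraded pairings
\[
H^{-i,2j}(\zp;\k) \otimes H^{-(m-n)+i,\,2(m-j)}(\zp;\k) \to H^{-(m-n),2m}(\zp;\k) \cong \k,
\]
and perfectness of the total pairing forces each block to be a perfect pairing of finite-dimensional $\k$-vector spaces. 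Equality of dimensions on the two sides yields the desired formula. The main obstacle is the bigraded concentration of the fundamental class in a single bidegree, which is where the simplicial-sphere property of $K_P$ enters essentially; everything else is a formal consequence of bigraded multiplicativity of cup product together with ordinary Poincar\'e duality.
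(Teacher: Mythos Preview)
Your argument is correct and is precisely the standard proof of this result. Note, however, that the paper itself does not prove Theorem~\ref{Poinc}: it is quoted from~\cite[Theorem~8.18]{B-P} without argument. The proof you outline---identify the bidegree of the fundamental class via Hochster's formula and the fact that $K_P$ is an $(n-1)$-sphere, use bigraded multiplicativity of the cup product coming from the Koszul model of Theorem~\ref{zkcoh}, and then split ordinary Poincar\'e duality on the closed orientable $(m+n)$-manifold $\zp$ into block-diagonal pairings---is exactly the approach taken in~\cite{B-P}. So there is no discrepancy to report: you have reconstructed the cited proof.
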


From now on we shall drop the coefficient field $\k$ from the
notation of (co)homology groups. Given a subset $I\subset{[m]}$,
we denote by $K_{I}$ the corresponding \emph{full subcomplex} of
$K$ (the restriction of $K$ to $I$). The following classical
result can be also obtained as a corollary of Theorem~\ref{zkcoh}:
\begin{theo}[Hochster, see~{\cite[Cor. 8.8]{B-P}}]\label{hoh}
Let $K=K_P$. We have:
$$
   \beta^{-i,2j}(P)=\sum\limits_{J\subset{[m]},|J|=j}\dim
   \widetilde{H}^{j-i-1}(K_{J}).
$$
\end{theo}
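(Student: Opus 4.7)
The plan is to derive Hochster's formula as a direct corollary of Theorem~\ref{zkcoh} by refining the $\Z$-grading on the Koszul-type DGA $R^*(K_P):=\Lambda[u_1,\ldots,u_m]\otimes\k[K_P]$ to a finer $\Z^m$-multigrading. I assign multidegree $e_i\in\Z^m$ to both $u_i$ and $v_i$; then the differential $du_i=v_i$ preserves the multidegree, so $R^*(K_P)$ splits as a direct sum of subcomplexes $R^*(K_P)_\alpha$ indexed by $\alpha\in\Z_{\ge0}^m$. A monomial basis of $R^*(K_P)$ consists of elements $u_L\cdot v^\beta$ with $L\subset[m]$, $\beta\in\Z_{\ge0}^m$ and $\mathrm{supp}(\beta)\in K_P$; such a monomial has bidegree $(-|L|,\,2(|L|+|\beta|))$ and multidegree $\mathbf 1_L+\beta$. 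Consequently
\[
  \beta^{-i,2j}(P)=\sum_{\alpha\in\Z_{\ge 0}^m,\,|\alpha|=j}\dim H^{-i}\bigl(R^*(K_P)_\alpha\bigr),
\]
and it remains to compute each multigraded cohomology group separately.

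Next I would show that $R^*(K_P)_\alpha$ is acyclic whenever $\alpha$ is not squarefree. Suppose $\alpha_\ell\ge 2$ for some $\ell$; every monomial in $R^*(K_P)_\alpha$ factors as $m'\cdot w_\ell$, where $w_\ell$ collects the $u_\ell,v_\ell$ factors and $m'$ is $\ell$-free. The $\ell$-factor lies in a graded piece of the two-variable complex $\Lambda[u_\ell]\otimes\k[v_\ell]$ of strictly positive $v_\ell$-degree, which is acyclic by the standard Koszul contraction $h(v_\ell^k)=u_\ell v_\ell^{k-1}$, $h(u_\ell v_\ell^k)=0$. Tensoring this homotopy with the $\ell$-free factor proves $R^*(K_P)_\alpha$ is acyclic, so only squarefree multidegrees $\alpha=\mathbf 1_J$ can contribute to the sum above.

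For $\alpha=\mathbf 1_J$ a basis of $R^*(K_P)_{\mathbf 1_J}$ consists of the elements $u_L\cdot v_{J\setminus L}$ with $L\subset J$ and $J\setminus L\in K_J$. Setting $\sigma:=J\setminus L$, this monomial lies in homological degree $-|L|=-(|J|-|\sigma|)$, and the differential sends it to $\sum_{\ell\in L}\pm u_{L\setminus\ell}v_{\sigma\cup\{\ell\}}$; summands with $\sigma\cup\{\ell\}\notin K_J$ vanish in $\k[K_P]$, so up to signs this matches the simplicial coboundary of $\sigma$ in $K_J$. Identifying $u_L v_\sigma$ with the cochain dual to $\sigma$ then gives an isomorphism between the homological-degree-$(-i)$ part of $R^*(K_P)_{\mathbf 1_J}$ and $\widetilde C^{\,|J|-i-1}(K_J)$, whence
\[
  H^{-i}\bigl(R^*(K_P)_{\mathbf 1_J}\bigr)\cong\widetilde H^{|J|-i-1}(K_J).
\]
Summing over $J\subset [m]$ with $|J|=j$ yields the theorem.

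The main technical hurdles are the verification that $R^*(K_P)_\alpha$ is acyclic for every non-squarefree $\alpha$ and the careful sign bookkeeping needed to match the squarefree piece with the reduced simplicial cochain complex of $K_J$; the rest is a formal multigraded decomposition.
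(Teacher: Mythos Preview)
The paper does not actually prove this theorem; it states it as a classical result of Hochster, cites \cite[Cor.~8.8]{B-P}, and only remarks that it ``can be also obtained as a corollary of Theorem~\ref{zkcoh}''. Your proposal supplies precisely that derivation: the multigraded refinement of the Koszul DGA $\Lambda[u_1,\ldots,u_m]\otimes\k[K_P]$, the acyclicity of the non-squarefree pieces via the standard contracting homotopy, and the identification of the squarefree piece $R^*(K_P)_{\mathbf 1_J}$ with the shifted reduced cochain complex of $K_J$. This is exactly the argument given in \cite{B-P}, so your approach coincides with the one the paper defers to; it is correct, and the only points requiring care are the ones you flag yourself (the sign matching and the fact that in the non-squarefree case the index $\ell$ with $\alpha_\ell\ge 2$ always lies in $\mathrm{supp}(\beta)$, so the homotopy stays inside the Stanley--Reisner quotient).
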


We also introduce the following subset in the boundary of~$P$:
\begin{equation}\label{PI}
  P_I=\bigcup_{i\in I}F_i\subset P.
\end{equation}
Note that if $K=K_P$ then $K_I$ is a deformation retract of~$P_I$
for any~$I$. The following is a direct corollary of
Theorem~\ref{hoh}.

\begin{coro}\label{fact1} We have
\[
  \beta^{-i,2(i+1)}(P)=\sum_{I\subset[m],|I|=i+1}\bigl(cc(P_{I})-1\bigr),
\]
where $cc(P_I)$ is the number of connected components of the
space~$P_I$.
\end{coro}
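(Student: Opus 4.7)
The plan is to specialize Hochster's formula (Theorem~\ref{hoh}) to the diagonal case $j=i+1$ and then translate reduced $0$-th cohomology of $K_J$ into connected components of $P_J$ via the stated deformation retract.

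First I would set $j=i+1$ in Theorem~\ref{hoh}, obtaining
\[
  \beta^{-i,2(i+1)}(P)=\sum_{J\subset[m],\,|J|=i+1}\dim\widetilde H^{0}(K_{J}).
\]
Next I would observe that since $P$ has no redundant inequalities, every singleton $\{i\}$ is a vertex of $K=K_P$; consequently $K_J$ is a nonempty simplicial complex for every nonempty $J\subset[m]$, and therefore
\[
  \dim\widetilde H^{0}(K_{J})=cc(K_{J})-1.
\]

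Finally I would invoke the remark made immediately before the corollary, namely that $K_J$ is a deformation retract of $P_J$ for every $J$ when $K=K_P$. This is the only geometric input and the one step that would deserve a brief justification (the retraction comes from pushing the union of facets $\bigcup_{i\in J}F_i$ onto the nerve of its covering by facets). Since homotopy equivalent spaces have the same number of connected components, $cc(K_J)=cc(P_J)$, and substituting into the previous display yields the claim.

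There is essentially no obstacle here beyond invoking the deformation retract; the entire argument is a one-line specialization of Hochster's formula followed by the identification $cc(K_J)=cc(P_J)$.
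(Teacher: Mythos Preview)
Your argument is correct and is exactly the intended one: the paper states this result as a direct corollary of Theorem~\ref{hoh}, relying on the specialization $j=i+1$ together with the observation (made just before the corollary) that $K_I$ is a deformation retract of $P_I$. There is nothing to add.
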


The structure of this paper is as follows. Calculations for
Stasheff polytopes (also known as associahedra) are given in
Section~2. In Section~3 we calculate the
bigraded Betti numbers of truncation polytopes (iterated vertex
cuts of simplices) completely. These calculations were first made
in~\cite{T-H} using a similar but slightly different method; an
alternative combinatorial argument was given in~\cite{Ch-K}. We
also compare the calculations of the Betti numbers with the known
description of the diffeomorphism type of $\zp$ for truncation
polytopes~\cite{B-M}.

The author is grateful to his scientific adviser Taras Panov for
fruitful discussions and advice which was always so kindly
proposed during this work.

\section{Stasheff polytopes}
\emph{Stasheff polytopes}, also known as \emph{associahedra}, were
introduced as combinatorial objects in the work of Stasheff on
higher associativity~\cite{stas63}. Explicit convex realizations
of Stasheff polytopes were found later by Milnor and others,
see~\cite{B} for details.

We denote the $n$-dimensional
Stasheff polytope by~$\As^n$. The $i$-dimensional faces of $\As^n$
($0\le{i}\le{n-1}$) bijectively correspond to the sets of $n-i$
pairwise nonintersecting diagonals in an~$(n+3)$-gon~$G_{n+3}$.
(We assume that diagonals having a common vertex are
nonintersecting.) A face $H$ belongs to a face $H'$ if and only if
the set of diagonals corresponding to $H$ contains the set of
diagonals corresponding to~$H'$.

In particular, vertices of $As^{n}$ correspond to complete
triangulations of $G_{n+3}$ by its diagonals, and facets of
$As^{n}$ correspond to diagonals of~$G_{n+3}$. We therefore
identify the set of diagonals in $G_{n+3}$ with the set of facets
$\{F_1,\ldots,F_m\}$ of~$\As^n$, and identify both sets with $[m]$
when it is convenient. Note that $m=\frac{n(n+3)}2$.

We shall need a convex realization of $\As^n$
from~\cite[Lecture~II,~Th.~5.1]{B}:

\begin{theo}
$\As^n$ can be identified with the intersection of the
parallelepiped
\[
  \bigl\{\mb y\in\R^n\colon 0\le y_j\le j(n+1-j)\quad
  \text{for }\;1\le j\le n\bigr\}
\]
with the halfspaces
\[
  \bigl\{\mb y\in\R^n\colon y_j-y_k+(j-k)k\ge0 \bigr\}
\]
for $1\le k<j\le n$.
\end{theo}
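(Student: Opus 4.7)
The plan is to establish an explicit bijection between the vertices of the polytope $Q$ defined on the right-hand side of the theorem and the complete triangulations of the $(n+3)$-gon $G_{n+3}$, and then to match the entire face structure via a correspondence between facets and diagonals. First I would index the $m = n(n+3)/2$ defining inequalities of $Q$ by the $m$ diagonals of $G_{n+3}$. A convenient convention, after labelling the vertices of $G_{n+3}$ as $0, 1, \ldots, n+2$, is to assign the inequality $y_j \ge 0$ to the diagonal $\{0, j+1\}$, the inequality $y_j \le j(n+1-j)$ to the diagonal $\{j, n+2\}$, and the mixed inequality $y_j - y_k + (j-k)k \ge 0$ to the diagonal $\{k, j+1\}$ for $1 \le k < j \le n$. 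A direct count confirms that every non-edge chord of $G_{n+3}$ appears on this list exactly once.

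Next, for each complete triangulation $T$ of $G_{n+3}$ I would define a candidate vertex $v(T) \in \R^n$ by the Milnor--Loday recipe: dualise $T$ into a rooted planar binary tree with $n+1$ leaves (root corresponding to the edge $\{0, n+2\}$), index its internal nodes from left to right as $1, \ldots, n$, and set $v(T)_j = l_j r_j$, where $l_j$ and $r_j$ are the numbers of leaves in the left and right subtrees at the $j$-th internal node. The core of the argument then consists of verifying three claims: (a) $v(T) \in Q$ for every triangulation $T$; (b) the defining inequalities of $Q$ that hold with equality at $v(T)$ are precisely those indexed by the diagonals not contained in $T$, giving exactly $m - n$ tight inequalities; (c) a set of inequalities can be made simultaneously tight at a single point of $Q$ if and only if the corresponding set of diagonals is pairwise noncrossing in $G_{n+3}$. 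Taken together, (a)--(c) show that each $v(T)$ is a vertex of $Q$, cut out precisely by the $n$ facets corresponding to the $n$ diagonals of $T$; that the $v(T)$ exhaust the vertex set of $Q$; and that the face poset of $Q$ is isomorphic to that of $\As^n$.

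The main obstacle is claim (b), especially for the mixed inequalities $y_j - y_k + (j-k)k \ge 0$. The one-sided bounds on individual $y_j$ are easy: with $l_j + r_j$ fixed at the number of leaves below the $j$-th node one has $l_j r_j \le j(n+1-j)$ with equality iff the chord $\{j, n+2\}$ is missing from $T$, while non-negativity is automatic with equality iff $\{0, j+1\} \notin T$. The mixed case requires computing the quantity $l_j r_j - l_k r_k + (j-k)k$ and showing, by induction on $j - k$ (or by a case analysis based on the lowest common ancestor of the $k$-th and $j$-th internal nodes in the binary tree), that it is nonnegative and vanishes precisely when $\{k, j+1\}$ is not a diagonal of $T$. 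Once (b) is in hand, (c) follows from the classical fact that a collection of chords of a convex polygon is pairwise noncrossing iff it extends to a complete triangulation, and the theorem is proved.
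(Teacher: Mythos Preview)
The paper does not actually prove this theorem: it is quoted verbatim from Buchstaber's lecture notes~\cite[Lecture~II, Th.~5.1]{B} and used as a black box. So there is no ``paper's own proof'' to compare against, and your task is really to supply a self-contained argument.

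Unfortunately the construction you describe does not work as written. A planar binary tree with $n+1$ leaves has $n$ internal nodes, and there are only $C_n$ such trees; but $\As^n$ has $C_{n+1}$ vertices, corresponding to the $C_{n+1}$ triangulations of $G_{n+3}$. Already for $n=2$ your recipe produces only the two points $(1,2)$ and $(2,1)$ in $\R^2$, whereas the pentagon $Q$ has the five vertices $(0,0),(1,0),(2,1),(2,2),(0,2)$. Your equality analysis also breaks down: since $l_j,r_j\ge1$ one always has $l_jr_j\ge1$, so the inequality $y_j\ge0$ is never tight at a Loday point; and your claim~(b) is stated backwards --- at a vertex of a simple $n$-polytope exactly $n$ facet inequalities are tight, namely those indexed by the $n$ diagonals \emph{in} $T$, not the $m-n$ diagonals outside~$T$ (as you in fact write correctly two lines later).

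What is true is that Buchstaber's realization is an affine image of Loday's realization in $\R^{n+1}$. Take trees with $n+2$ leaves (hence $n+1$ internal nodes), form Loday's point $(y_1,\ldots,y_{n+1})$ with $y_i=l_ir_i$, and set
\[
  z_j=\sum_{i=1}^{j}y_i-\binom{j+1}{2},\qquad 1\le j\le n.
\]
Under this map Loday's facet inequalities $\sum_{i=a}^{b}y_i\ge\binom{b-a+2}{2}$ become exactly the inequalities $z_j\ge0$, $z_j\le j(n+1-j)$, and $z_j-z_k+(j-k)k\ge0$ of the theorem, with the diagonal labelling you propose. If you rewrite your plan with $n+2$ leaves and insert this coordinate change, the remaining verifications in (a)--(c) go through essentially as you outline; but without it the argument collapses at the very first step.
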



\begin{prop}
We have:
$$
  b^{3}(\mathcal Z_{As^{n}})=\beta^{-1,4}(\As^n)=\binom{n+3}{4}.
$$
\end{prop}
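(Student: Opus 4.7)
The plan is to read $\beta^{-1,4}(\As^n)$ off Corollary~\ref{fact1} and reinterpret the resulting sum combinatorially on the $(n+3)$-gon.

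First I would note that $b^{3}(\zp) = \beta^{-1,4}(P)$ for any simple polytope~$P$. By~\eqref{bp}, $b^{3}(\zp) = \sum_{2j-i=3} \beta^{-i,2j}(P)$, so the candidates are $(i,j) \in \{(1,2),(3,3),(5,4),\ldots\}$. Theorem~\ref{hoh} expresses $\beta^{-i,2j}(P)$ as a sum of $\dim \widetilde{H}^{\,j-i-1}(K_J)$ over $|J|=j$; substituting $i = 2j-3$ gives cohomological degree $2-j$. This is $\leq -1$ as soon as $j \geq 3$, and since every singleton is a simplex of $K_P$ each full subcomplex $K_J$ is nonempty, so $\widetilde{H}^{-1}(K_J) = 0$. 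The only surviving term is $\beta^{-1,4}(P)$.

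Next I would apply Corollary~\ref{fact1} with $i = 1$, which yields $\beta^{-1,4}(P) = \sum_{|I|=2} (cc(P_I) - 1)$. Facets of a simple polytope are themselves simple polytopes and hence connected, so for $I = \{i, j\}$ the summand equals $1$ if $F_i \cap F_j = \varnothing$ and $0$ otherwise. Thus $\beta^{-1,4}(P)$ counts the unordered pairs of disjoint facets of~$P$.

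Specializing to $P = \As^n$, facets are in bijection with diagonals of $G_{n+3}$, and the incidence rule recalled before the convex realization theorem says that two facets $F_i, F_j$ share a codimension-two face iff the two diagonals are nonintersecting (where sharing a common vertex counts as nonintersecting). Hence disjoint facet pairs correspond to pairs of diagonals of $G_{n+3}$ that properly cross in the interior of the polygon. Any $4$ vertices of $G_{n+3}$ taken in their cyclic order determine a unique such crossing pair, namely the two diagonals of the resulting convex quadrilateral, and every crossing pair arises from a unique $4$-element vertex subset. This gives exactly $\binom{n+3}{4}$ crossings, which is the required value.

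The only delicate point I expect is the first paragraph, namely ensuring that $b^{3}$ picks up no contribution beyond $\beta^{-1,4}$; once this vanishing is in place, the rest is direct bookkeeping on the $(n+3)$-gon.
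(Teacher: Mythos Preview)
Your proof is correct and follows essentially the same route as the paper: identify $\beta^{-1,4}(P)$ with the number of pairs of disjoint facets, translate this for $\As^n$ into pairs of crossing diagonals of $G_{n+3}$, and biject those with $4$-element vertex subsets. The only differences are cosmetic --- the paper reaches the disjoint-facet count via the degree-two generators of the Stanley--Reisner ideal rather than via Corollary~\ref{fact1}, and it leaves the equality $b^{3}=\beta^{-1,4}$ unargued, whereas you supply the Hochster-based vanishing explicitly.
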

\begin{proof}
The number $\beta^{-1,4}(P)$ is equal to the number of monomials
$v_{i}v_{j}$ in the Stanley--Reisner ideal of
$P$~\cite[\S3.3]{B-P}, or to the number of pairs of disjoint
facets of~$P$. In the case $P=\As^n$ the latter number is equal to
the number of pairs of intersecting diagonals in the
$(n+3)$-gon~$G_{n+3}$, see~\cite[Lecture II, Cor 6.2]{B}. It
remains to note that, for any 4-element subset of vertices of
$G_{n+3}$ there is a unique pair of intersecting diagonals whose
endpoints are these 4 vertices.
\end{proof}

\begin{rema}The above calculation can be also made using the general formula
$\beta^{-1,4}(P)=\binom{f_{0}}{2}-f_{1}$,
see~\cite[Lemma~8.13]{B-P}, where $f_{i}$ is the number of
$(n-i-1)$-faces of~$P$. The numbers $f_{i}$ for $\As^n$ are
well-known, see~\cite[Lecture II]{B}.
\end{rema}

In what follows, we assume that there are no multiple intersection points of the
diagonals of $G_{n+3}$, which can be achieved by a small
perturbation of the vertices. We choose a cyclic order of vertices
of $G_{n+3}$, so that 2 consequent vertices are joined by an edge.
We refer to the diagonals of $G_{n+3}$ joining the $i$th and the
$(i+2)$th vertices (modulo $n+3$), for $i=1,\ldots,n+3$ as
\emph{short}; other diagonals are \emph{long}.

We refer to intersection points of diagonals inside $G_{n+3}$ as
\emph{distinguished points}. A diagonal segment joining two
distinguished points is called a \emph{distinguished segment}.
Finally, a \emph{distinguished triangle} is a triangle whose
vertices are distinguished points and whose edges are
distinguished segments.

\begin{theo}\label{TH1}
We have:
$$
   b^{4}(\mathcal Z_{As^{n}})=\beta^{-2,6}(\As^{n})=5\binom{n+4}{6}
$$
\end{theo}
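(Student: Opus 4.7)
The plan is to invoke Corollary~\ref{fact1} with $i=2$, which reduces the computation to
\[
  \beta^{-2,6}(\As^n) = \sum_{|I|=3}\bigl(cc(P_I)-1\bigr),
\]
and, using the deformation retraction $K_I\simeq P_I$, to a purely combinatorial count over triples of diagonals of $G_{n+3}$. For a $3$-subset $I=\{d_1,d_2,d_3\}$, an edge $\{i,j\}$ of $K_I$ corresponds to a compatible pair $d_i,d_j$ (sharing an endpoint, or not crossing in the interior of $G_{n+3}$), and $I$ itself spans a $2$-simplex of $K_I$ exactly when all three diagonals are pairwise non-crossing, since any such triple extends to a triangulation of $G_{n+3}$ and hence to a vertex of $\As^n$.

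The first step is to classify triples by the number $c\in\{0,1,2,3\}$ of pairwise crossings. A direct inspection of the four possibilities shows
\[
  cc(K_I)-1 = \begin{cases} 0, & c=0,1,\\ 1, & c=2,\\ 2, & c=3,\end{cases}
\]
so $\beta^{-2,6}(\As^n)=N_2+2N_3$, where $N_c$ denotes the number of unordered triples of diagonals with exactly $c$ pairwise crossings.

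The second step is to enumerate $N_2$ and $N_3$ according to the number $e\in\{3,4,5,6\}$ of distinct endpoints used by the three diagonals in $V(G_{n+3})$. Since two diagonals sharing an endpoint cannot cross, the cases $e=3,4$ admit at most one crossing and contribute nothing. For $e=6$ one tabulates the $15$ pairings of six cyclically ordered points by crossing number: the classical distribution $(a_0,a_1,a_2,a_3)=(5,6,3,1)$, a special case of the Touchard--Riordan formula, contributes $3\binom{n+3}{6}$ triples to $N_2$ and $\binom{n+3}{6}$ triples to $N_3$. For $e=5$, fixing five cyclically ordered points $p_1,\ldots,p_5$ and a shared vertex (say $p_1$), among the six unordered pairs of diagonals through $p_1$ only $\{p_1p_3,p_1p_4\}$ together with the third diagonal $p_2p_5$ has two crossings; by cyclic symmetry every choice of shared vertex contributes exactly one such triple, so $e=5$ contributes $5\binom{n+3}{5}$ triples to $N_2$. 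In each configuration counted, paired endpoints have separation at least $2$ in the $5$- or $6$-point cyclic order, which guarantees that every chord is a genuine diagonal of $G_{n+3}$ rather than a polygon edge.

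Putting these together and applying Pascal's identity,
\[
  \beta^{-2,6}(\As^n) = N_2+2N_3 = 5\binom{n+3}{5}+5\binom{n+3}{6} = 5\binom{n+4}{6},
\]
as claimed. The main obstacle is not conceptual but combinatorial: the patient enumeration of crossing patterns for $e=5$ and $e=6$, together with the verification that the cyclic-symmetry argument for $e=5$ produces five genuinely distinct triples per $5$-subset and is not conflated with the $e=6$ subcases.
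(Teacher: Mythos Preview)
Your proof is correct and takes a genuinely different route from the paper's. The paper works in the Koszul model of Theorem~\ref{zkcoh}: it identifies the $4$-cocycles $u_iu_jv_k$ with \emph{distinguished segments} (pairs of crossing points on a common diagonal~$d_k$) and the relations $d(u_iu_ju_k)$ with \emph{distinguished triangles}, obtaining $\beta^{-2,6}(\As^n)=S_{n+3}-T_{n+3}$. It then computes $T_{n+3}=\binom{n+3}{6}$ directly, while $S_{n+3}=(n+3)\binom{n+3}{5}$ is extracted from a fairly laborious explicit summation over diagonal lengths (Lemmas~\ref{l3} and~\ref{l4}). You instead invoke Corollary~\ref{fact1} and reduce to the identity $\beta^{-2,6}=N_2+2N_3$, then enumerate $N_2$ and $N_3$ by the number of endpoints used. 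The two computations are related by $S_{n+3}=N_2+3N_3$ and $T_{n+3}=N_3$, so they are consistent, but your decomposition by endpoint count avoids the power-sum identities entirely and gives a cleaner bijective explanation of the final binomial.

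One point worth tightening when you write this up: your sentence about separation~$\ge 2$ in the sub-polygon is the crux of why the count is uniform over all $e$-subsets, and it deserves to be stated as a lemma rather than a parenthetical. The logic is that any chord appearing in a configuration with $c\ge 2$ crossings has an intermediate chosen vertex on each arc, hence is automatically a diagonal of $G_{n+3}$; conversely, configurations using an adjacent pair in the sub-polygon have $c\le 1$ and are irrelevant. Making this explicit preempts the natural objection that some chords of a $5$- or $6$-subset may fail to be diagonals of the ambient polygon.
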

\begin{proof}

We need to calculate the number of generators in the 4th
cohomology group of
$H[\Lambda[u_{1},\ldots,u_{m}]\times{k[\As^{n}]},d]$, see
Theorem~\ref{zkcoh} (note that here $m=\frac{(n+3)n}{2}$ is the
number of diagonals in $G_{n+3}$). This group is generated by the
cohomology classes of cocycles of the type $u_{i}u_{j}v_{k}$,
where $i\ne{j}$ and $u_{i}v_{k}$, $u_{j}v_{k}$ are 3-cocycles.
These 3-cocycles correspond to the pairs $\{i,k\}$ and $\{j,k\}$
of intersecting diagonals in $G_{n+3}$, or to a pair of
distinguished points on the $k$th diagonal. It follows that every
cocycle $u_{i}u_{j}v_{k}$ is represented by a distinguished
segment. The identity
$$
  d(u_{i}u_{j}u_{k})=u_{i}u_{j}v_{k}-u_{i}v_{j}u_{k}+v_{i}u_{j}u_{k}
$$
implies that the cohomology classes represented by the cocycles in
the right hand side are linearly dependent. Every such identity
corresponds to a distinguished triangle.

We therefore obtain that $\beta^{-2,6}(\As^{n})=S_{n+3}-T_{n+3}$
where $S_{n+3}$ is the number of distinguished segments and
$T_{n+3}$ is the number of distinguished triangles inside $G_{n+3}$.
These numbers are calculated in the next three lemmas.

\begin{lemm}
The number of distinguished triangles in $G_{n+3}$ is given by
$$
   T_{n+3}=\binom{n+3}{6}
$$
\end{lemm}
\begin{proof}
We note that there is only one distinguished triangle in a hexagon
(see Fig.~\ref{hexfig}); and therefore every 6 vertices of
$G_{n+3}$ contribute one distinguished triangle.
\begin{figure}[h]
\begin{center}
\begin{picture}(120,40)

\put(45,0){\line(1,0){30}}
\put(75,0){\line(2,1){20}}
\put(95,10){\line(-2,3){20}}
\put(75,40){\line(-1,0){30}}
\put(45,40){\line(-2,-3){20}}
\put(25,10){\line(2,-1){20}}

\put(45,0){\line(0,1){40}}
\put(45,0){\line(3,4){30}}
\put(45,0){\line(5,1){50}}

\put(75,0){\line(-5,1){50}}
\put(75,0){\line(-3,4){30}}
\put(75,0){\line(0,1){40}}

\put(95,10){\line(-1,0){70}}
\put(95,10){\line(-5,3){50}}

\put(75,40){\line(-5,-3){50}}

\multiput(67.2,10)(0.1,0){6}{\line(-3,4){7.5}}

\multiput(52.5,10)(0.1,0){6}{\line(3,4){7.5}}

\linethickness{0.6mm} \put(67.5,10){\line(-1,0){15}}

%

\end{picture}
\caption{} \label{hexfig}
\end{center}
\end{figure}
\end{proof}

Given a diagonal $d$ of $G_{n+3}$, denote by $p(d)$ the number of
distinguished points on~$d$. We define the \emph{length} of $d$ as
the smallest of the numbers of vertices of $G_{n+3}$ in the open
halfplanes defined by~$d$. Therefore, short diagonals have length
1 and all diagonals have length $\le{\frac{n+1}{2}}$. We refer to
diagonals of maximal length simply as \emph{maximal}. Obviously
$p(d)$ depends only on the length of~$d$, and we denote by $p(j)$
the number of distinguished points on a diagonal of length~$j$.

\begin{lemm}\label{l3}
If $n=2k-1$ is odd, then
$$
   S_{n+3}=\frac{n+3}{2}\sum\limits_{l=1}^{k-1}
   \Bigl(4l^{2}k^{2}-2k(2l^{3}+l)\Bigr)+\frac{n+3}{4}k^{2}(k^{2}-1).
$$
If $n=2k-2$ is even, then
$$
  S_{n+3}=\frac{n+3}{2}\sum\limits_{l=1}^{k-1}
  \Bigl(4l^{2}k^{2}-2k(2l^{3}+2l^{2}+l)+(l^{4}+2l^{3}+2l^{2}+l)\Bigr).
$$
\end{lemm}
\begin{proof}
First assume that $n=2k-1$. Then
\begin{align*}
  S_{n+3}&=\sum\limits_{d}\frac{p(d)(p(d)-1)}{2}=\\
  &=(n+3)\biggl(\sum\limits_{j=1}^{\frac{n+1}{2}}\frac{p(j)(p(j)-1)}{2}\biggr)-
  \biggl(\frac{n+3}{2}\biggr)\frac{p(\frac{n+1}{2})(p(\frac{n+1}{2})-1)}{2},
\end{align*}
since the number of distinguished segments on the maximal
diagonals is counted in the sum twice.

We denote by $v$ the $(n+3)$th vertex of $G_{n+3}$ and numerate
the diagonals coming from $v$ by their lengthes. We denote by
$c(i,j)$ the number of intersection points of the $j$th diagonal
coming from $v$ with the diagonals from the $i$th vertex, for
$1\le{i}\le{j}\le{\frac{n+1}{2}}$, and set $c(i,j)=0$ for $i>j$.
Then we have
\begin{equation}\label{pj}
   p(j)=\sum\limits_{i=1}^{\frac{n+1}{2}}c(i,j),
\end{equation}
To compute $c(i,j)$ we note that
\begin{align*}
&c(1,1)=n;\\
&c(i,j-1)=c(i,j)+1\quad\text{for }1\le{i}<j\le{\frac{n+1}{2}};\\
&c(i+1,j+1)=c(i,j)-1\quad\text{for }1\le{i}\le
j\le{\frac{n-1}{2}}.
\end{align*}

It follows that
\begin{equation}\label{cij}
  c(i,j)=c(1,j-i+1)-(i-1)=c(1,1)-(j-i)-(i-1)=n-j+1,
\end{equation}
for $i\le{j}$. Note that $c(i,j)$ does not depend on $i$. Substituting this in~\eqref{pj} and then
substituting the resulting expression for $p(j)$ in the sum for $S_{n+3}$ above we
obtain the required formula.

The case $n=2k-2$ is similar. The only difference is that there
are two maximal diagonals coming from every vertex of $G_{n+3}$,
so that no subtraction is needed in the sum for $S_{n+3}$.
\end{proof}

\begin{lemm}\label{l4}
The number of distinguished segments is given by
$$
   S_{n+3}=(n+3)\binom{n+3}{5}.
$$
\end{lemm}
\begin{proof}
This follows from Lemma~\ref{l3} by summation using the following
formulae for the sums $\Sigma_{n}$ of the $n$th powers of the
first $(k-1)$ natural numbers:
\begin{align*}
\Sigma_{1}&=\frac{k(k-1)}{2},
&\Sigma_{2}&=\frac{k(k-1)(2k-1)}{6},\\
\Sigma_{3}&=\frac{k^{2}(k-1)^{2}}{4},
&\Sigma_{4}&=\frac{k(k-1)(2k-1)(3k^{2}-3k-1)}{30}.
\end{align*}
\end{proof}

Now Theorem~\ref{TH1} follows from Lemma~\ref{l3} and Lemma~\ref{l4}.
\end{proof}

The following fact follows from the description of the
combinatorial structure of~$\As^n$ (see also~\cite[Lecture II, Cor.
6.2]{B}):
\begin{prop}\label{fact2}
Two facets $F_ 1$ and $F_ 2$ of the polytope $\As^n$ do not
intersect if and only if the corresponding diagonals $d_1$ and
$d_2$ of the polygon $G_{n+3}$ intersect (in a distinguished point).
\end{prop}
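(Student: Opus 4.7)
The plan is to deduce the statement directly from the combinatorial description of $\As^n$ recalled at the beginning of Section~2, so no new calculation is needed.

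First I would translate the statement $F_1\cap F_2\ne\varnothing$ into the diagonal language. By the face-diagonal correspondence for $\As^n$, the facets $F_1,F_2$ correspond to diagonals $d_1,d_2$, and a face $H$ lies in both $F_1$ and $F_2$ if and only if the set of diagonals associated with $H$ contains both $d_1$ and $d_2$. Hence $F_1\cap F_2\ne\varnothing$ is equivalent to the existence of a set of pairwise nonintersecting diagonals of $G_{n+3}$ containing $\{d_1,d_2\}$, which in turn is equivalent to $\{d_1,d_2\}$ itself being a set of pairwise nonintersecting diagonals (any such pair extends to a complete triangulation and thus to a vertex of $\As^n$).

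Next I would unwind the word \emph{nonintersecting} using the convention fixed at the start of Section~2: diagonals sharing a vertex of $G_{n+3}$ are by convention nonintersecting. Together with the assumption that the vertices of $G_{n+3}$ are in general position (so no three diagonals have a common interior point, and distinguished points lie in the open interior of the polygon), this means that $d_1$ and $d_2$ are nonintersecting precisely when they do not meet at any distinguished point, i.e.\ when no distinguished point lies on both $d_1$ and $d_2$.

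Combining the two equivalences yields
\[
  F_1\cap F_2\ne\varnothing\quad\Longleftrightarrow\quad d_1\text{ and }d_2\text{ share no distinguished point},
\]
which contraposes to the claim. There is essentially no obstacle here; the only point to be careful about is the slight abuse of the word ``intersect'', which in the statement refers to meeting at a distinguished (interior) point rather than set-theoretic intersection, since two diagonals sharing a common vertex of $G_{n+3}$ correspond to facets whose intersection is a codimension-two face of $\As^n$ and is therefore nonempty.
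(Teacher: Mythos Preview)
Your argument is correct and is precisely the unpacking of the paper's one-line justification that the proposition ``follows from the description of the combinatorial structure of~$\As^n$''; the paper gives no further proof beyond this remark and a reference. You have filled in exactly the intended details, including the care about the convention that diagonals sharing a vertex count as nonintersecting.
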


\begin{lemm}\label{le0}
The number of distinguished points on a maximal diagonal of
$G_{n+3}$ is given by
\begin{align*}
&q=q(n)=\begin{cases}
  \frac{n(n+2)}{4},&\text{if $n$ is even;}\\
  \frac{(n+1)^2}{4},&\text{if $n$ is odd.}
  \end{cases}
\end{align*}
\end{lemm}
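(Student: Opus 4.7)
The plan is to reduce the statement to the closed-form formula $p(j)=j(n+1-j)$ for the number of distinguished points on a diagonal of length $j$, which is essentially already available from the working inside the proof of Lemma~\ref{l3}, and then simply evaluate this formula at the maximal admissible value of $j$.

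First I would record the formula for $p(j)$. The cleanest way is to invoke the elementary observation that a diagonal $d$ of length $j$ cuts $G_{n+3}$ into two open arcs containing $j$ and $n+1-j$ vertices respectively (the two endpoints of $d$ being excluded). Because of the general-position assumption and the fact that $d$ itself contributes no intersection, every diagonal meeting $d$ transversally is obtained by joining one vertex from the $j$-side to one vertex from the $(n+1-j)$-side, so there are exactly $j(n+1-j)$ such crossings. Thus $p(j)=j(n+1-j)$. Alternatively, one can read this off from equations~\eqref{pj} and~\eqref{cij} in the proof of Lemma~\ref{l3}: since $c(i,j)=n-j+1$ for $i\le j$ and $c(i,j)=0$ for $i>j$, the sum in~\eqref{pj} collapses to $p(j)=j(n+1-j)$.

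Next I would identify the length of a maximal diagonal. Splitting into the parity cases of $n$, one sees that the maximal length is $\lfloor (n+1)/2\rfloor$, which equals $(n+1)/2$ when $n$ is odd and $n/2$ when $n$ is even. Substituting these values of $j$ into $p(j)=j(n+1-j)$ gives
\[
  p\Bigl(\tfrac{n+1}{2}\Bigr)=\tfrac{n+1}{2}\cdot\tfrac{n+1}{2}=\tfrac{(n+1)^{2}}{4}
\]
in the odd case, and
\[
  p\Bigl(\tfrac{n}{2}\Bigr)=\tfrac{n}{2}\cdot\tfrac{n+2}{2}=\tfrac{n(n+2)}{4}
\]
in the even case, which is exactly the asserted formula for $q(n)$.

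There is no real obstacle: once $p(j)=j(n+1-j)$ is in hand, the lemma is a one-line substitution. The only thing to be slightly careful about is the parity bookkeeping in identifying the maximal length, and the fact that the formula for $p(j)$ genuinely does not depend on which maximal diagonal is chosen (so the answer $q=q(n)$ is well defined).
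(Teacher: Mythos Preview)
Your proposal is correct and, in its second alternative (using equations~\eqref{pj} and~\eqref{cij} from the proof of Lemma~\ref{l3}), it is exactly the paper's argument: the paper plugs the maximal value of $j$ into these formulae and sums, treating the odd and even cases separately. Your first alternative---the direct count of crossings by choosing one vertex from each of the two arcs cut off by the diagonal---is a slightly cleaner and more self-contained route to the closed form $p(j)=j(n+1-j)$, since it works uniformly in $n$ and does not require revisiting the parity-dependent setup of Lemma~\ref{l3}; either way the final substitution is identical.
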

\begin{proof}
The case $n=2$ is obvious. If $n$ is odd, then setting
$j=\frac{n+1}{2}$ in~\eqref{pj} and using~\eqref{cij} we calculate
$$
  p(j)=\sum\limits_{i=1}^{\frac{n+1}{2}}c\Bigl(i,\frac{n+1}{2}\Bigr)=
  \frac{(n+1)^{2}}{4}.
$$
If $n$ is even, then the maximal diagonal has length $j=\frac{n}{2}$. It is easy to see
that we have $p(j)=\sum_{i=1}^{n/2}c(i,j)$ instead of~\eqref{pj},
and~\eqref{cij} still holds. Therefore,
$$
   p(j)=\sum\limits_{i=1}^{\frac{n}{2}}c\Bigr(i,\frac{n}{2}\Bigr)=
   \frac{n(n+2)}{4}.
$$
\end{proof}

\begin{theo}\label{mainass}
Let $P=\As^n$ be an $n$-dimensional associahedron, $n\ge{3}$. The
bigraded Betti numbers of $P$ satisfy
\begin{align*}
  &\beta^{-q,2(q+1)}(P)=\begin{cases}
  n+3,&\text{if $n$ is even;}\\
  \frac{n+3}{2},&\text{if $n$ is odd;}
  \end{cases}\\
  &\beta^{-i,2(i+1)}(P)=0\quad\text{for }i\ge{q+1};
\end{align*}
where $q=q(n)$ is given in Lemma~\ref{le0}.
\end{theo}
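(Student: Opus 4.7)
The plan is to use Corollary~\ref{fact1} to write
\[
  \beta^{-i,2(i+1)}(P)=\sum_{|I|=i+1}(cc(K_I)-1),
\]
with $cc(K_I)=cc(P_I)$ via the deformation retract. By Proposition~\ref{fact2} the $1$-skeleton of $K_I$ is the non-crossing graph on $I$, i.e.\ two vertices are adjacent precisely when the corresponding diagonals of $G_{n+3}$ do not meet in a distinguished point. The theorem thus reduces to identifying those $I$ with $|I|\ge q+1$ for which $K_I$ is disconnected.

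I would first construct the extremal examples. For each maximal diagonal $d$, set $I_d:=\{d\}\cup\operatorname{crossers}(d)$, of size $q+1$ by Lemma~\ref{le0}. In $K_{I_d}$ the vertex $d$ is isolated (it crosses every other element), and the remaining $q$ crossers form a single connected subcomplex: writing $L,R$ for the two sides of $d$, any two crossers $(x_1,y_1),(x_2,y_2)\in L\times R$ are linked through the bridge $(x_1,y_2)$, itself a crosser of $d$ sharing an endpoint with each. Hence $cc(K_{I_d})=2$ and $I_d$ contributes exactly $1$ to the sum. A direct count shows that $G_{n+3}$ has $(n+3)/2$ maximal diagonals when $n$ is odd (the diameters) and $n+3$ when $n$ is even (each vertex being the endpoint of exactly two maximal diagonals of length $n/2$), matching the two cases of the theorem.

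The heart of the proof is the converse: \emph{if $K_I$ is disconnected and $|I|\ge q+1$, then $I=I_d$ for some maximal $d$}. Fixing a component $C$ of $K_I$, every $e\in I\setminus C$ crosses every $c\in C$, so $I\setminus C\subseteq\bigcap_{c\in C}\operatorname{crossers}(c)$ and hence $|I\setminus C|\le p(c)\le q$ for any chosen $c\in C$. When $|C|=1$, say $C=\{c\}$, this yields $|I|\le q+1$ with equality iff $c$ is maximal and $I\setminus C=\operatorname{crossers}(c)$, giving $I=I_c$ exactly. When $|C|\ge 2$, I would pick a non-crossing pair $c,c'\in C$ (available by connectedness of $C$) and compute $|\operatorname{crossers}(c)\cap\operatorname{crossers}(c')|$ directly: in the shared-vertex subcase $c=(0,a),c'=(0,b),\,a<b$, the common crossers are exactly the diagonals $(x,y)$ with $x\in\{1,\dots,a-1\},y\in\{b+1,\dots,n+2\}$, of size $(a-1)(n+2-b)\le\lfloor n^2/4\rfloor<q-1$ for $n\ge 3$; the opposite-side subcase (no shared vertex) gives a strictly smaller intersection.

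The main obstacle is absorbing the term $|C|$ when it is large: the two-element bound above only implies $|I|\le q+1$ for bounded $|C|$. I would close this by iterating: since $|\bigcap_{c\in C}\operatorname{crossers}(c)|$ is monotonically non-increasing as $C$ grows, it suffices to control it over a well-chosen pairwise non-crossing subset of $C$, namely a fan of $s$ diagonals through a common vertex, which sharpens the bound on the common crossers to $\lfloor(n+2-s)^2/4\rfloor$ and compensates exactly for the growth of $|C|$; a separate argument handles connected $C$ that do not themselves form a fan. Once the main lemma is established, summing over the $I_d$'s produces the claimed values of $\beta^{-q,2(q+1)}(P)$, and the vanishing $\beta^{-i,2(i+1)}(P)=0$ for $i\ge q+1$ follows because no $I$ of size $\ge q+2$ can be disconnected.
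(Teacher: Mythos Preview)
Your setup is correct: Corollary~\ref{fact1} reduces the problem to classifying disconnected $K_I$ with $|I|\ge q+1$, the extremal sets $I_d$ are constructed properly, and the count of maximal diagonals is right. The case $|C|=1$ is also handled correctly, and your two-element bound $|\operatorname{crossers}(c)\cap\operatorname{crossers}(c')|\le\lfloor n^2/4\rfloor<q-1$ for a non-crossing pair sharing a vertex is valid.

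The gap is exactly where you flag it. Your proposed fix via fans does not close it: a fan of size $s$ inside $C$ bounds $|I\setminus C|\le\lfloor(n+2-s)^2/4\rfloor$, but nothing in your argument ties $s$ to $|C|$. A connected component $C$ of the non-crossing graph need not contain a large fan; it can be a path $c_1\text{--}c_2\text{--}c_3\text{--}\cdots$ in which non-adjacent $c_i,c_j$ cross, so the maximal pairwise non-crossing subset (let alone maximal fan) can stay bounded while $|C|$ grows. Thus ``compensates exactly for the growth of $|C|$'' is not justified, and ``a separate argument handles connected $C$ that do not themselves form a fan'' is precisely the missing content. Concretely, already at $n=4$ your pair bound applied symmetrically to two components $C,D$ with $|C|,|D|\ge2$ only yields $|I|\le 2\lfloor n^2/4\rfloor=8$, which does not contradict $|I|\ge q+1=7$.

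The paper takes a different route to get past this. Rather than a direct extremal bound on $|C|+|\bigcap_{c\in C}\operatorname{crossers}(c)|$, it runs an induction on $n$: one removes from $G_{n+3}$ a vertex carrying the fewest diagonals of $I$ and views the remainder inside $G_{n+2}$. This yields first that $cc(P_I)\le 2$ once $|I|>\frac{n(n+2)}{4}$, and then a non-obvious ``improvement'' lemma: if $I=I_1\sqcup I_2$ with $|I_1|,|I_2|\ge 2$ and $|I|\ge q+1$, one can produce a strictly larger $I'$ with a singleton component. The latter immediately contradicts $|I'|\le 1+q$, forcing one component to be a singleton, after which your $|C|=1$ analysis finishes the job. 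If you want to salvage the direct approach, the honest target is the inequality $|C|+|\bigcap_{c\in C}\operatorname{crossers}(c)|\le q$ for every $C$ with $|C|\ge 2$ that is connected in the non-crossing graph; this appears to be true in small cases but requires a genuine structural argument, not just the fan bound.
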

\begin{proof}
We prove the theorem by induction on $n$. The base case $n=3$ can
be seen from the tables of bigraded Betti numbers below. By
Corollary~\ref{fact1}, in order to calculate
$\beta^{-i,2(i+1)}(P)$, we need to find all $I\subset[m]$, \
$|I|=i+1$, whose corresponding $P_I$ has more than one connected
component. In the case $i=q$ we shall prove that $cc(P_I)\le2$ for
$|I|=q+1$, and describe explicitly those $I$ for which
$cc(P_I)=2$. In the case $i>q$ we shall prove that $cc(P_I)=1$ for
$|I|=i+1$. These statements will be proven as separate lemmas; the
step of induction will follow at the end.

We numerate the vertices of $G_{n+3}$ by the integers from 1 to
$n+3$. Then every diagonal $d$ corresponds to an ordered pair
$(i,j)$ of integers such that $i<j-1$. It is convenient to view
the diagonal corresponding to $(i,j)$ as the segment $[i,j]$
inside the segment $[1,n+3]$ on the real line. Then
Proposition~\ref{fact2} may be reformulated as follows:

\begin{prop}\label{inter}
The  facets $F_{1}$ and $F_{2}$ of $P=\As^n$ do not intersect if
and only if the corresponding segments $[i_1,j_1]$ and $[i_2,j_2]$
overlap, that is,
$$
  F_{1}\cap{F_{2}}=\varnothing\quad\Longleftrightarrow\quad
  i_{1} <  i_{2} <  j_{1}
 <  j_{2}\quad \text{or}\quad i_{2} <  i_{1} <  j_{2} <  j_{1}.
$$
\end{prop}

Let $I$ be a set of diagonals of $G_{n+3}$ (or integer segments
in~$[1,n+3]$), and $P_I$ the corresponding set~\eqref{PI}. We
write $I=I_1\sqcup I_2$ whenever $P_I$ has exactly two connected
components corresponding to $I_1$ and $I_2$. We also denote by
$e(I)$ the set of endpoints of segments from~$I$; its a subset of
integers between 1 and $n+3$.

\begin{prop}\label{ei1i2}
If $I=I_1\sqcup I_2$ then the subsets $e(I_1)$ and $e(I_2)$ are
disjoint.
\end{prop}
\begin{proof}
Follows directly from Proposition~\ref{inter}.
\end{proof}

Given an integer $m\in[1,n+3]$ and a set of segments~$I$, we
denote by $c_I(m)$ the number of segments in $I$ that have $m$ as
one of their endpoints (equivalently, the number of diagonals in
$I$ with endpoint~$m$). Then $0\le{c_I(m)}\le{n}$.

\begin{prop}\label{cm}
If $I=I_{1}\sqcup{I_{2}}$ then there exists $m$ such that
$c_I(m)\le{\frac{n+1}{2}}$.
\end{prop}
\begin{proof}
Assume the opposite is true. Choose integers $m_1\in e(I_1)$ and
$m_2\in e(I_2)$. Since $c_I(m_1)>\frac{n+1}{2}$, \
$c_I(m_2)>\frac{n+1}{2}$ and $e(I_1)$, $e(I_2)$ are disjoint by
the previous proposition, we obtain that the total number of
elements in $e(I)$ is more than
$2+\frac{n+1}{2}+\frac{n+1}{2}=n+3$. A contradiction.
\end{proof}

\begin{lemm}\label{le1} We have that $cc(P_I)\le{2}$
for $|I|>l(n)=\frac{n(n+2)}{4}$.
\end{lemm}
\begin{proof}
We prove this lemma by induction on $n$.

First let $n=3$, and assume that the statement of the lemma fails,
i.e. there is a set
$I=I_{1}\sqcup{I_{2}}\sqcup{I_{3}}\sqcup{\ldots}$ of diagonals of
$G_{6}$, \ $|I|\ge{4}$, such that $cc(P_ I)\ge{3}$. As there are
only 3 long diagonals in $G_{6}$, there exists a short diagonal
$d\in{I}$; assume $d\in{I_{1}}$. Since $cc(P_ I)\ge{3}$, every
$e\in{I_{2}}$ and $f\in{I_{3}}$ intersect~$d$. Hence, $e$ and $f$
meet at a vertex $A$ of~$G_{6}$.
This contradicts the fact that $e(I_{2})$ and $e(I_{3})$ are
disjoint (see~Proposition~\ref{ei1i2}).

Now let $n>3$ and assume that there is a set
$I=I_{1}\sqcup{I_{2}}\sqcup{I_{3}}\sqcup{\ldots}$ of diagonals of
$G_{n+3}$, $|I|>\frac{n(n+2)}{4}$, with $cc(P_{I})\ge{3}$.

If there exists $m\in[1,n+3]$ with $c_I(m)=0$, then we may assume
that $m$ is the first vertex, and view $I$ as a set of diagonals
of $G_{n+2}$ (the segment $[2,n+3]$ cannot belong to $I$, since
otherwise $cc(P_I)=1$). As $l(n)>l(n-1)$, the induction assumption
finishes the proof of the lemma.

Now $c_I(m)\ge{1}$ for every $m\in{[1,n+3]}$. Then by the argument
similar to that of Proposition~\ref{cm}, there exists $m$ with
$c_I(m)\le{\frac{n}{3}}$. Consider 2 cases:

\noindent{\bf1.} There exists $m_{0}\in{e(I_{k})}$ for some
$1\le{k}\le{cc(P_ I)}$ with the smallest value of
$c_I(m)\le{\frac{n}{3}}$, such that $|I_{k}|>c_I(m_{0})$.

We may assume that one of these $m_{0}$ is the first vertex.
Removing from $I$ all segments with endpoint~1, we obtain a new
set $\tilde{I}$ of segments inside $[2,n+3]$ (the segment
$[2,n+3]$ cannot belong to $I$, as otherwise $cc(P_ I)\le{2}$). We
have:
\[
  |\tilde{I}|=|I|-c_I(1)>\frac{n(n+2)}{4}-\frac{n}{3}>
  \frac{(n-1)(n+1)}{4}=l(n-1).
\]
By the induction assumption, $2\ge cc(P_{\tilde{I}})\ge cc(P_
I)\ge 3$. A contradiction.

\noindent{\bf2.} For every vertex $m_{0}$ with the smallest value
of $c_I(m)\ge{1}$ we have $|I_{k}|=c_I(m_{0})$, where
$m_{0}\in{e(I_{k})}$.

Again, we may assume that one of these $m_{0}$ is the first vertex
$1\in I_k$. We have $c_I(1)=1$, as otherwise there are $\ge{2}$
integer points $m$ inside $[2,n+3]$ which belong to $e(I_{k})$ and
have $c_I(m)=1$ (remember that $|I_{k}|=c_I(m_{0})$).

Without loss of generality we may assume that $k=1$. Then
$$
  |I|=1+|I_{2}|+|I_{3}|+\ldots \le
  1+(1+q(n-1))\le{2+\frac{n^{2}}{4}}\le{\frac{n(n+2)}{4}}.
$$
The first inequality above holds since
$\tilde{I}={I_{2}}\sqcup{I_{3}}\sqcup{\ldots}$ is a set of
diagonals of $G_{n+2}$ (the segment $[2,n+3]$ cannot belong to
$I$, because $cc(P_ I)\ge{3}$), and we can apply to $\tilde{I}$
the induction assumption in the proof of the main
Theorem~\ref{mainass}, which gives us $|\tilde I|\le1+q(n-1)$. We
get a contradiction with the assumption $|I|>\frac{n(n+2)}{4}$.
\end{proof}

\begin{lemm}\label{i1i2}
Assume that $I=I_1\sqcup I_2$, \ $|I|\ge q+1$, \ $|I_1|\ge2$ and
$|I_2|\ge2$. Then there exists another $I'$ such that
$I'=I'_{1}\sqcup{I'_{2}}$, \ $|I'_{1}|=1$ and $|I'|>|I|$.
\end{lemm}
\begin{proof}
The proof is by induction on $n$. The cases $n=3, 4, 5$ are checked
by a direct computation (see also the tables at the end of this
section).

Changing the numeration of vertices of $G_{n+3}$ if necessary, we
may assume that the first vertex has the smallest value
of~$c_I(m)$. Then $c_I(1)\le{\frac{n+1}{2}}$ by
Proposition~\ref{cm}.  Without loss of generality we may assume
that $1\notin e(I_1)$.

We claim that the segment $[2,n+3]$ does not belong to $I$.
Indeed, in the opposite case $c_I(1)>0$ (otherwise $cc(P_I)=1$),
$1\in{e(I_{2})}$, $[2,n+3]\in{I_{1}}$. If  $c_I(1)\ge{2}$, then
there is an integer point $m\in e(I_{2})$ inside $[2,n+3]$ with
$c_I(m)=1<c_I(1)$, which contradicts the choice of the first
vertex. Then $c_I(1)=1$ and $[2,n+3]\in{I_{1}}$ imply that
$|I_{2}|=c_I(1)=1$ which contradicts the assumption
$|I_{2}|\ge{2}$ in the lemma.

Removing from $I$ all segments with endpoint~1, we obtain a new
set $\tilde{I}$ of integer segments inside $[2,n+3]$. Note that
\begin{equation}\label{tildei}
  |\tilde{I}|=|I|-c_I(1)\ge|I|-{\textstyle\bigl[\frac{n+1}2\bigr]}.
\end{equation}

We want to apply the induction assumption to the set $\tilde{I}$
of integer segments inside $[2,n+3]$, viewed as diagonals in an
$(n+2)$-gon~$G_{n+2}$. To do this, we need to check the
assumptions of the lemma for $\tilde I$.

First, we claim that $\tilde{I}=\tilde{I_{1}}\sqcup\tilde{I_{2}}$,
i.e. $P_{\tilde I}$ has exactly two connected components. Indeed,
it obviously has at least two components, and the number of
components cannot be more than two by Lemma~\ref{le1}, since
\[
  |\tilde{I}|\ge|I|-\frac{n+1}2\ge q+1-\frac{n+1}2>
  \frac{(n+1)^{2}}{4}-\frac{n+1}{2}=l(n-1).
\]
Second, $|\tilde{I_{1}}|=|I_{1}|\ge{2}$ and
$|I_{2}|\ge{|\tilde{I_{2}}|}\ge{1}$. If $|\tilde{I_{2}}|=1$ then
we have either $c_I(1)=1$ or $c_I(1)=2$. (Indeed, if $c_I(1)=0$
then $|I_2|=|\tilde{I_{2}}|=1$, which contradicts the assumption,
and $c_I(1)$ cannot be more than 2 as otherwise $c_I(1)$ is not
the smallest one.) Therefore, $|I_{2}|\le3$. We also have
$|I_{1}|=|\tilde{I_{1}}|\le{p(d)}$,where
$d\in\tilde{I_{2}}=\{d\}$, because $d$ intersects every diagonal
from $I_{1}$. Due to Lemma~\ref{le0},
$p(d)\le{q(n-1)}\le{\frac{n^{2}}{4}}$. Hence,
\[
  |I|=|I_{1}|+|I_{2}|\le{p(d)+3}\le\frac{n^{2}}{4}+3\le{\frac{(n+1)^{2}}{4}}<q(n)+1\le{|I|}
\]
for $n\ge{6}$. A contradiction. Thus, $|\widetilde{I_2}|\ge2$.

It remains to check that $|\tilde{I}|\ge{q(n-1)+1}$. If $n$ is
odd, then
\[
  |\tilde{I}|\ge{|I|-\frac{n+1}{2}}\ge
  {\frac{(n+1)^{2}}{4}+1-\frac{n+1}{2}}=
  \frac{(n-1)(n+1)}{4}+1=q(n-1)+1.
\]
If $n$ is even, then
\[
  |\tilde{I}|\ge{|I|-\frac{n}{2}}\ge
  {\frac{n(n+2)}{4}+1-\frac{n}{2}}=\frac{n^{2}}{4}+1=q(n-1)+1.
\]

Now, applying the induction assumption to $\tilde I$, we find a
new set of integer segments $\tilde{J}$ inside $[2,n+3]$ with
$|\tilde{J}|>|\tilde{I}|$ and $|\tilde{J_{1}}|=1$. Then
$\tilde{J_{1}}=\{d\}$, where $d$ is a diagonal of $G_{n+2}$.
Hence, $|\tilde{J}|=|\tilde{J_{1}}|+|\tilde{J_{2}}|\le{1+p(d)}$.
We have $p(d)\le{q(n-1)}$, and the equality holds if and only if
$d=d_{max}$ is a maximal diagonal in~$G_{n+2}$. Therefore, we can
replace $\tilde J$ by $J'=J'_{1}\sqcup{J'_{2}}$, where
$J'_{1}=\{d_{max}\}$ and $J'_{2}$ is the set of diagonals in
$G_{n+2}$ which intersect $d_{max}$ at its distinguished points.
Indeed, we have
\begin{equation}\label{ji}
|J'|=1+q(n-1)\ge{1+p(d)}\ge{|\tilde{J}|}>|\tilde{I}|.
\end{equation}

Choosing $d_{max}$ in $G_{n+2}$ as the diagonal corresponding to
the segment $[2,k]$ where $k=\bigl[\frac{n+7}2\bigr]$ we observe
that it is also a maximal diagonal for $G_{n+3}$. Now take
$I'_1=\{d_{max}\}$ and take $I'_2$ to be the union of $J'_2$ and
all diagonals with endpoint 1 intersecting $d_{max}$. Since the
number of distinguished points on $d_{max}$ is
$\bigl[\frac{n+1}2\bigr]$, we obtain from~\eqref{ji}
and~\eqref{tildei}
\[
  |I'|=1+|I'_2|=1+|J'_2|+{\textstyle\bigl[\frac{n+1}2\bigr]}=
  |J'|+{\textstyle\bigl[\frac{n+1}2\bigr]}
  >|\tilde{I}|+{\textstyle\bigl[\frac{n+1}2\bigr]}\ge{|I|},
\]
which finishes the inductive argument.
\end{proof}

\begin{lemm}\label{le2}
Suppose $cc(P_I)=2$, $I=I_{1}\sqcup{I_{2}}$ and $|I|\ge{q+1}$.
Then either $|I_{1}|=1$ or $|I_{2}|=1$.
\end{lemm}
\begin{proof}
Assume the opposite, i.e. $|I_{1}|\ge{2}$ and $|I_{2}|\ge{2}$. By
Lemma~\ref{i1i2}, we may find another $I'=I'_1\sqcup I'_2$ such
that $|I'_{1}|=1$ and $|I'|>|I|\ge q+1$. On the other hand
$|I'_{1}|=1$ implies that $I'_{1}=\{d\}$ and $|I'|\le
1+p(d)\le1+q$. A contradiction.
\end{proof}

\begin{lemm}\label{le3}
Suppose $cc(P_I)=2$, $I=I_{1}\sqcup{I_{2}}$ and $|I|=q+1$. Then $I_{1}$ consists of a
single maximal diagonal $d_{max}$, and $I_{2}$ consists of all
diagonals of $G_{n+3}$ which intersect $d_{max}$.
\end{lemm}
\begin{proof}
By Lemma~\ref{le2}, we may assume that $I_{1}$ consists of a
single diagonal~$d$. Then
$$
  1+q=|I|=|I_{1}|+|I_{2}|\le 1+p(d)\le{1+q},
$$
which implies that $p(d)=q$ and $|I_{2}|=p(d)$.
\end{proof}

\begin{lemm}\label{le4}
Suppose $|I|>q+1$. Then $cc(P_I)=1$.
\end{lemm}
\begin{proof}
We have $|I|>q+1>l(n)$. Hence, $cc(P_I)\le2$ by
Lemma~\ref{le1}. Assume $cc(P_I)=2$ and $I=I_1\sqcup I_2$. Then
$|I_{1}|=1$ by Lemma~\ref{le2}, i.e. $I_{1}=\{d\}$ and $|I|\le
1+p(d)\le{1+q}$. This contradicts the assumption $|I|>q+1$.
\end{proof}
Now we can finish the induction in the proof of
Theorem~\ref{mainass}. From Corollary~\ref{fact1} and
Lemma~\ref{le3} we obtain that the number $\beta^{-q,2(q+1)}(P)$
is equal to the number of maximal diagonals in~$G_{n+3}$. The
latter equals $n+3$ when $n$ is even, and $\frac{n+3}{2}$ when $n$
is odd. The fact that $\beta^{-i,2(i+1)}(P)$ vanishes for
$i\ge{q+1}$ follows from Corollary~\ref{fact1} and
Lemma~\ref{le4}.
\end{proof}

We also calculate the bigraded Betti numbers of $\As^{n}$ for
$n\le 5$ using software package \emph{Macaulay 2}, see~\cite{Mac}.

The tables below have $n-1$ rows and $m-n-1$ columns. The number
in the intersection of the $k$th row and the $l$th column is
$\beta^{-l,2(l+k)}(\As^{n})$, where $ 1\le{l}\le{m-n-1}$ and
$2\le{l+k}\le{m-2}$. The other bigraded Betti numbers are zero
except for $\beta^{0,0}(\As^{n})=\beta^{-(m-n),2m}(\As^{n})=1$,
see~\cite[Ch.8]{B-P}. The bigraded Betti numbers given by
Theorem~\ref{mainass} are printed in bold.

\noindent{\bf1.} $n=2$, $m=5$.\\[2pt]
{\small
\begin{tabular}{|c|c|}
\hline
5 & 5\tabularnewline
\hline
\end{tabular}
}

\medskip


\noindent{\bf2.} $n=3$, $m=9$.\\[2pt]
{\small
\begin{tabular}{|c|c|c|c|c|}
\hline 15 & 35 & 24 & \bf{3} & \bf{0}\tabularnewline \hline \bf{0}
& \bf{3} & 24 & 35 & 15\tabularnewline \hline
\end{tabular}
}

\medskip

\noindent{\bf 3.} $n=4$, $m=14$.\\[2pt]
{\small
\begin{tabular}{|c|c|c|c|c|c|c|c|c|}
\hline 35 & 140 & 217 & 154 & 49 & \bf{7} & \bf{0} & \bf{0} &
\bf{0}\tabularnewline \hline 0 & 28 & 266 & 784 & 1094 & 784 & 266
& 28 & 0\tabularnewline \hline \bf{0} & \bf{0} & \bf{0} & \bf{7} &
49 & 154 & 217 & 140 & 35\tabularnewline \hline
\end{tabular}
}
\medskip

\noindent\textbf{4.} $n=5$, $m=20$.\\[2pt]
{\small
\begin{tabular}{|c|c|c|c|c|c|c|c|c|c|c|c|c|c|}
\hline
70 & 420 & 1089 & 1544 & 1300 & 680 & 226 & 44 & \bf{4} & \bf{0} & \bf{0}
\tabularnewline
\hline
0 & 144 & 1796 & 8332 & 20924 & 32309 & 32184 & 20798 & 8480 & 2053 & 264
\tabularnewline
\hline
0 & 0 & 12 & 264 & 2053 & 8480 & 20798 & 32184 & 32309 & 20924 & 8332
\tabularnewline
\hline
\bf{0} & \bf{0} & \bf{0} & \bf{0} & \bf{0} & \bf{4} & 44 & 226 & 680 & 1300 & 1544
\tabularnewline
\hline
\end{tabular} $\vdots$
}


\bigskip
The topology of moment-angle manifolds $\zp$ corresponding to
associahedra is far from being well understood even in the case
when $P$ is 3-dimensional. In this case the cohomology ring
$H^*(\zp)$ has nontrivial triple \emph{Massey products} by a
result of Baskakov (see~\cite[\S8.4]{B-P} or~\cite[\S5.3]{P}),
which implies that $\zp$ is not \emph{formal} in the sense of
rational homotopy theory.

\section{Truncation polytopes}
Let $P$ be a simple $n$-polytope and $v\in{P}$ its vertex. Choose
a hyperplane $H$ such that $H$ separates $v$ from the other
vertices and $v$ belongs to the positive halfspace $H_\ge$
determined by~$H$. Then $P\cap H_\ge$ is an $n$-simplex, and
$P\cap H_\le$ is a simple polytope, which we refer to as a
\emph{vertex cut} of~$P$. When the choice of the cut vertex is
clear or irrelevant we use the notation $\vc(P)$. We also use the
notation $\vc^k(P)$ for a polytope obtained from $P$ by iterating
the vertex cut operation $k$ times.

As an example of this procedure, we consider the polytope
$\vc^{k}(\Delta^{n})$, where $\Delta^{n}$ is an $n$-simplex,
$n\ge2$. We refer to  $\vc^{k}(\Delta^{n})$ as a \emph{truncation
polytope}; it has $m=n+k+1$ facets. Note that the combinatorial
type of $\vc^{k}(\Delta^{n}) $ depends on the choice of the cut
vertices if $k\ge{3}$, however we shall not reflect  this in the
notation.

Simplicial polytopes dual to $\vc^{k}(\Delta^{n})$ are known as
\emph{stacked polytopes}. They can be obtained from $\Delta^{n}$
by iteratively adding pyramids over facets.

The Betti numbers for stacked polytopes were calculated
in~\cite{T-H}, but the grading used there was different. We
include this result below, with a proof that uses a slightly
different argument and our `topological' grading and notation:
\begin{theo}\label{maintr}

Let $P=\vc^{k}(\Delta^{n})$ be a truncation polytope. Then for
$n\ge{3}$ the bigraded Betti numbers are given by the following
formulae:
\begin{align*}
&\beta^{-i,2(i+1)}(P)=i\binom{k+1}{i+1},\\
&\beta^{-i,2(i+n-1)}(P)=(k+1-i)\binom{k+1}{k+2-i},\\
&\beta^{-i,2j}(P)=0,\quad\text{for } i+1< j< i+n-1.
\end{align*}
The other bigraded Betti numbers are also zero, except for
\[
  \beta^{0,0}(P)=\beta^{-(m-n),2m}(P)=1.
\]
\end{theo}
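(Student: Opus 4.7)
The plan is to prove the three formulas separately, reducing the proof to one combinatorial count, one cohomological vanishing statement, and Poincaré duality.

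For the lower-corner formula $\beta^{-i,2(i+1)}(P) = i\binom{k+1}{i+1}$ I would apply Corollary~\ref{fact1} and induct on $k$. The base case $k = 0$ is immediate: $P = \Delta^n$ has all facets pairwise intersecting so $cc(P_I) = 1$ for every nonempty $I$, and indeed $i\binom{1}{i+1} = 0$ for $i \ge 1$. For the inductive step from $P = \vc^k(\Delta^n)$ to $P' = \vc(P)$, the new facet $F_{m+1}$ meets exactly the $n$ facets of $P$ that contain the cut vertex, while the pairwise intersection pattern of the remaining $m$ facets is preserved. Splitting $\sum_{|I|=i+1}(cc(P'_I)-1)$ according to whether $I$ contains $F_{m+1}$, the first part reproduces $\beta^{-i,2(i+1)}(P) = i\binom{k+1}{i+1}$ by induction; the second part, after a case analysis on whether $I\setminus\{F_{m+1}\}$ meets the $n$ neighbors of $F_{m+1}$, contributes $i\binom{k+1}{i}$, and Pascal's identity yields $i\binom{k+2}{i+1}$.

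For the middle vanishing $\beta^{-i,2j}(P) = 0$ with $i+1 < j < i + n - 1$, Hochster's formula (Theorem~\ref{hoh}) reduces the claim to the combinatorial statement that for every $J \subseteq [m]$ the reduced cohomology $\tilde H^q((K_P)_J)$ vanishes whenever $0 < q < n - 2$. I would prove this by induction on $k$, again using that $K_{P'}$ arises from $K_P$ by stellar subdivision at the facet $\sigma$ corresponding to the cut vertex, with new apex vertex $w$. When $w \in J'$, Mayer--Vietoris applied to $(K_{P'})_{J'} = A \cup B$, with $A$ the closed star of $w$ and $B = (K_{P'})_{J' \setminus \{w\}}$, gives the conclusion: $A$ is a cone and hence contractible, $A \cap B$ equals the link of $w$ (either contractible or the entire $(n-2)$-sphere $\partial\sigma$), and $B$ has the required cohomology concentration by induction. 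When $w \notin J'$, the subcomplex $(K_{P'})_{J'}$ is either $(K_P)_{J'}$ or $(K_P)_{J'}$ with the simplex $\sigma$ removed, which at worst increases $\tilde H^{n-2}$.

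The upper-corner formula $\beta^{-i,2(i+n-1)}(P) = (k+1-i)\binom{k+1}{k+2-i}$ follows from the lower-corner formula by Poincaré duality (Theorem~\ref{Poinc}) applied with $m - n = k + 1$ and the substitution $i' = k+1-i$. The exceptional entries $\beta^{0,0}(P) = \beta^{-(m-n),2m}(P) = 1$ are standard. The main obstacle lies in the middle-vanishing step: one must carefully check that the Mayer--Vietoris connecting homomorphism $\tilde H^{n-2}(A \cap B) \to \tilde H^{n-1}(A \cup B)$ does not produce spurious classes in intermediate degrees when $\sigma \subseteq J'$ and $A \cap B$ is a genuine $(n-2)$-sphere. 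Once this local computation near $w$ is dispatched, the rest of the proof consists of routine bookkeeping and binomial identities.
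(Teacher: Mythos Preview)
Your proposal is correct and follows essentially the same route as the paper: induction on $k$ via Hochster for the $\beta^{-i,2(i+1)}$ formula (splitting subsets by whether they contain the new vertex; the paper records this as the recurrence $\beta^{-i,2(i+1)}(P')=\binom{m-n}{i}+\beta^{-(i-1),2i}(P)+\beta^{-i,2(i+1)}(P)$, which is exactly your $i\binom{k+1}{i+1}+i\binom{k+1}{i}$ after substituting the inductive values), Poincar\'e duality for the second formula, and an inductive cohomology-vanishing lemma for the third.

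The only organisational difference is in the vanishing lemma. The paper splits into five explicit cases according to whether $v\in W$ and how $W$ meets $V(F)$, handling each by a direct identification or a short Mayer--Vietoris step; you instead use a uniform closed-star decomposition $A\cup B$. Your version works, but note one slip: when $w\in J'$ and $V(\sigma)\subset J'$, your $B=(K_{P'})_{J'\setminus\{w\}}$ is \emph{not} a full subcomplex of $K_P$ (it is $(K_P)_{J'\setminus\{w\}}$ with the top simplex $\sigma$ deleted), so the inductive hypothesis does not apply to it directly. You need to invoke your own $w\notin J'$ case first---removing $\sigma$ only affects $\widetilde H^{n-2}$ and $\widetilde H^{n-1}$---and then feed that into the Mayer--Vietoris for $A\cup B$. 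The ``obstacle'' you flag about the connecting map $\widetilde H^{n-2}(A\cap B)\to\widetilde H^{n-1}(A\cup B)$ is not actually the issue, since that map lives outside the range $0<q<n-2$ you care about; the genuine point to check is the cohomology of $B$. Once the order of the two cases is swapped, everything goes through, and the paper's five-case analysis is just a more explicit unpacking of the same computation.
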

\begin{rema}
The first of the above formulae was proved in~\cite{Ch-K} combinatorially.
\end{rema}
\begin{proof}
We start by analysing the behavior of bigraded Betti numbers under
a single vertex cut. Let $P$ be an arbitrary simple polytope and
$P'=\vc(P)$. We denote by $Q$ and $Q'$ the dual simplicial
polytopes respectively, and denote by $K$ and $K'$ their boundary
simplicial complexes. Then $Q'$ is obtained by adding a pyramid
with vertex $v$ over a facet $F$ of~$Q$. We also denote by $V$,
$V'$ and $V(F)$ the vertex sets of $Q$, $Q'$ and $F$ respectively,
so that $V'=V\cup{v}$.

The proof of the first formula is based on the following lemma:
\begin{lemm}\label{betapp'}
Let $P$ be a simple $n$-polytope with $m$ facets and $P'=\vc(P)$.
Then
$$
   \beta^{-i,2(i+1)}(P')=\binom{m-n}{i}+\beta^{-(i-1),2i}(P)+\beta^{-i,2(i+1)}(P).
$$
\end{lemm}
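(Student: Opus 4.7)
My plan is to apply Corollary~\ref{fact1} to both sides of the claimed identity. First I would describe the simplicial complex $K'=K_{P'}$ in terms of $K=K_P$. The vertex cut of $P$ at $v$ is combinatorially dual to attaching a pyramid (with apex $v^*$) over the facet $F$ of the dual simplicial polytope $Q=P^*$ that corresponds to $v$. Consequently, as simplicial complexes,
\[
K'=(K\setminus\{F\})\cup(\{v^*\}*\partial F),
\]
where $F$ is an $(n-1)$-simplex of $K$ on a vertex set $V(F)\subset[m]$ of size $n$, and $v^*$ is a new vertex. The facet set of $P'$ is $[m]\cup\{v^*\}$, and by Corollary~\ref{fact1},
\[
\beta^{-i,2(i+1)}(P')=\sum_{\substack{I'\subset[m]\cup\{v^*\}\\|I'|=i+1}}(cc(P'_{I'})-1).
\]
I would split this sum according to whether $v^*\in I'$.

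The contribution of subsets with $v^*\notin I'$ should be exactly $\beta^{-i,2(i+1)}(P)$. To see this, observe that $K'_{I'}$ agrees with $K_{I'}$ except possibly in lacking the top simplex $F$ (when $V(F)\subset I'$); removing that simplex does not affect connectivity because $\partial F$ is connected for $n\ge3$, so $cc(P'_{I'})=cc(P_{I'})$ and the contribution telescopes to $\beta^{-i,2(i+1)}(P)$.

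For subsets $I'\ni v^*$, I would write $I=I'\setminus\{v^*\}$ with $|I|=i$ and analyze the effect of adjoining $v^*$ to $K_I$. The key observation is that in $K'_{I'}$ the vertex $v^*$ is adjacent exactly to the vertices of $J:=I\cap V(F)$. When $J=\varnothing$, $v^*$ is an isolated vertex and $cc(P'_{I'})=cc(P_I)+1$; there are $\binom{m-n}{i}$ such choices of $I$. When $J\ne\varnothing$, the set $J$ is a face of $F\in K$, hence a simplex of $K_I$, so its vertices lie in a single component of $K_I$; adjoining $v^*$ merely enlarges that component and $cc(P'_{I'})=cc(P_I)$. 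Summing $cc(P'_{I'})-1$ over all such $I$ then yields $\binom{m-n}{i}+\beta^{-(i-1),2i}(P)$ by a direct count, and adding the two case contributions gives the stated identity. The main obstacle is the bookkeeping in this second case, particularly in the sub-subcase $V(F)\subset I$, where one must check simultaneously that deleting the $(n-1)$-simplex $F$ from $K_I$ and attaching the disk $\{v^*\}*\partial F$ preserves the component count; this again relies on the connectivity of $\partial F$ for $n\ge3$.
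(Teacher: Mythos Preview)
Your proposal is correct and follows essentially the same route as the paper. Both arguments apply Hochster's formula in degree $j=i+1$ (equivalently, Corollary~\ref{fact1}), split the sum over $(i+1)$-subsets according to whether the new vertex lies in the subset, and then in the ``new vertex present'' case distinguish whether the remaining $i$ old vertices meet $V(F)$; the paper is simply terser, whereas you spell out why removing the top simplex $F$ (in the $v^*\notin I'$, $V(F)\subset I'$ sub-case) and replacing $F$ by the disk $\{v^*\}*\partial F$ (in the $v^*\in I'$, $V(F)\subset I$ sub-case) leave the component count unchanged, correctly noting that this uses $n\ge 3$ via the connectedness of $\partial F$.
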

\begin{proof}

Applying Theorem~\ref{hoh} for $j=i+1$, we obtain:
\begin{align}
\beta^{-i,2(i+1)}(P')&=\sum\limits_{W\subset{V'},\,|W|=i+1}\dim\,\widetilde{H}_{0}
(K'_{W})\notag\\
&=\sum\limits_{W\subset{V'},\,v\in{W},\,|W|=i+1}\dim\,\widetilde{H}_{0}
(K'_{W})\label{sum1}\\
&+\sum\limits_{W\subset{V'},\,v\notin{W},\,|W|=i+1}\dim\,\widetilde{H}_{0}
(K'_{W})\label{sum2}.
\end{align}

Sum \eqref{sum2} above is $\beta^{-i,2(i+1)}(P)$ by
Theorem~\ref{hoh}.

For sum \eqref{sum1} we have: in $W$ there are $i$ `old' vertices
and one new vertex~$v$. Therefore, the number of connected
components of $K'_{W}$ (which is by 1 greater than the dimension of $\widetilde{H}_{0}
(K'_{W})$) either remains the same (if
$W\cap{F}\neq{\varnothing}$) or increases by 1 (if
$W\cap{F}=\emptyset$, in which case the new component is the new
vertex~$v$). The number of subsets $W$ of the latter type is equal
to the number of ways to choose $i$ vertices from the $m-n$ `old'
vertices that do not lie in~$F$. Sum~\eqref{sum1} is therefore
given by
$$
   \sum\limits_{W\subset{V},|W|=i}\dim\,\widetilde{H}_{0}(K_{W})
   +\binom{m-n}{i}=\beta^{-(i-1),2i}(P)+\binom{m-n}{i},
$$
where we used Theorem~\ref{hoh} again.
\end{proof}

Now the first formula of Theorem~\ref{maintr} follows by induction
on the number of cut vertices, using the fact that
$\beta^{-i,2(i+1)}(\Delta^n)=0$ for all~$i$ and
Lemma~\ref{betapp'}.\\

The second formula follows from the bigraded Poincare duality, see Theorem~\ref{Poinc}.\\

The proof of the third formula relies on the following lemma.

\begin{lemm}\label{zero}
Let $P$ be a truncation polytope, $K$ the boundary complex of the
dual simplicial polytope, $V$ the vertex set of~$K$, and $W$ a
nonempty proper subset of~$V$. Then
$$
   \widetilde{H}_{i}(K_{W})=0\quad\text{for }i\neq{0,n-2}.
$$
\end{lemm}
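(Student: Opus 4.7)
The plan is to induct on the number $k$ of vertex cuts, where $P = \vc^k(\Delta^n)$. The base case $k = 0$ is immediate: $K = \partial\Delta^n$ and $K_W$ is a full simplex on any proper subset $W \subsetneq V$, hence contractible. For the inductive step, $K'$ is obtained from $K$ by the stellar subdivision of a facet $F$ with new apex $v$, so $V' = V \sqcup \{v\}$. I will analyze $K'_W$ for $W \subsetneq V'$ according to whether $v \in W$ and whether $V(F)$ is contained in $W \setminus \{v\}$.

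In the case $v \notin W$, the simplices of $K'$ with vertices in $W$ coincide with those of $K_W$, except that $F$ itself is removed when $V(F) \subseteq W$. If $V(F) \not\subseteq W$, then $K'_W = K_W$ and we are done by induction. If $V(F) \subseteq W$ and $W \subsetneq V$, apply the long exact sequence of the pair $(K_W, K_W \setminus\{F\})$, whose relative homology is $\mathbb{Z}$ in degree $n-1$ and zero elsewhere. Combined with $\widetilde{H}_{n-1}(K_W) = 0$ from induction, this yields $\widetilde{H}_i(K_W \setminus \{F\}) = 0$ for $i \neq 0, n-2$, and identifies the class $[\partial F] \in \widetilde{H}_{n-2}(K_W \setminus \{F\})$ with the image of the generator of the relative group under the connecting map, showing that $[\partial F]$ is not a boundary. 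The boundary case $W = V$ is handled directly by the observation that $|K \setminus \{F\}| \cong D^{n-1}$ is contractible.

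For $v \in W$, set $W' = W \setminus \{v\} \subsetneq V$ and decompose $K'_W = A \cup B$, where $A$ is the closed star of $v$ (a cone, hence contractible) and $B$ is the subcomplex of simplices not containing $v$. Then $B = K'_{W'}$ is one of the complexes analyzed above, while the link $A \cap B$ is either a full simplex on $V(F) \cap W'$ when $V(F) \not\subseteq W'$ or the boundary $\partial F \simeq S^{n-2}$ when $V(F) \subseteq W'$. If $A \cap B$ is contractible, then the Mayer--Vietoris sequence gives $\widetilde{H}_*(K'_W) \cong \widetilde{H}_*(B)$ and we conclude. Otherwise all Mayer--Vietoris terms vanish by induction outside of degrees $0$, $n-2$, and $n-1$, with the $n-1$ piece reading
\[
0 \to \widetilde{H}_{n-1}(K'_W) \to \widetilde{H}_{n-2}(\partial F) = \mathbb{Z} \to \widetilde{H}_{n-2}(B).
\]

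The main obstacle is verifying that the last map above is injective, which will force $\widetilde{H}_{n-1}(K'_W) = 0$. But this map sends the generator to the class $[\partial F]$ inside $B = K_{W'} \setminus \{F\}$, which is precisely the nontrivial class identified in the $V(F) \subseteq W$ subcase of the previous paragraph (applied with $W$ replaced by $W'$). This closes the Mayer--Vietoris argument and completes the induction.
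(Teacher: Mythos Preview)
Your proof is correct and follows the same inductive skeleton as the paper: induct on the number of vertices, and case-split according to whether $v \in W$ and how $W\setminus\{v\}$ sits relative to $V(F)$. The one place where the arguments genuinely diverge is the subcase $v \in W$, $V(F) \subseteq W'$. The paper simply observes that here $K'_W$ is a \emph{subdivision} of $K_{W'}$ (the stellar subdivision of $F$ takes place entirely inside $K'_W$), so $\widetilde{H}_*(K'_W)\cong\widetilde{H}_*(K_{W'})$ immediately and no separate injectivity check is needed. Your star/link Mayer--Vietoris decomposition instead forces you to verify that $[\partial F]\ne 0$ in $\widetilde{H}_{n-2}(K_{W'}\setminus\{F\})$, which you do correctly via the long exact sequence of the pair established in the earlier $v\notin W$ case. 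Both routes work; the paper's subdivision observation is the cleaner shortcut. One small imprecision: in your $v\in W$, $V(F)\not\subseteq W'$ branch you should allow $V(F)\cap W'=\varnothing$, where $A\cap B$ is empty rather than a contractible simplex---but then $K'_W=\{v\}\sqcup B$ and the conclusion for $i>0$ is immediate, so this does not affect the argument.
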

\begin{proof}

The proof is by induction on the number $m=|V|$ of vertices
of~$K$. If $m=n+1,$ then $P$ is an $n$-simplex, and $K_{W}$ is
contractible for every proper subset $W\subset{V}$.

To make the induction step we consider $V'=V\cup{v}$ and $V(F)$ as
in the beginning of the proof of Theorem~\ref{maintr}. Assume the
statement is proved for $V$ and let $W$ be a proper subset
of~$V'.$

We consider the following 5 cases.

\smallskip

\noindent\emph{Case 1:} $v\in{W},\; W\cap{V(F)}\neq{\varnothing}.$

If $V(F)\subset{W}$, then $K'_{W}$ is a subdivision of
$K_{W-\{v\}}$. It follows that
$\widetilde{H}_{i}(K'_{W})\cong{\widetilde{H}_{i}(K_{W-\{v\}})}$.

If $W\cap{V(F)}\neq{V(F)}$, then we have
$$
   K'_{W}=K_{W-\{v\}}\cup{K'_{W\cap{V(F)\cup{\{v\}}}}},\quad
   K_{W-\{v\}}\cap{K'_{W\cap{V(F)\cup{\{v\}}}}}=K_{W\cap{V(F)}},
$$
and both $K_{W\cap{V(F)}}$ and $K'_{W\cap{V(F)\cup{\{v\}}}}$ are
contractible. From the Mayer--Vietoris exact sequence we again
obtain
$\widetilde{H}_{i}(K'_{W})\cong{\widetilde{H}_{i}(K_{W-\{v\}})}$.

\smallskip

\noindent\emph{Case 2:} $v\in{W},\; W\cap{V(F)}=\varnothing.$

In this case it is easy to see that
$K'_{W}=K_{W-\{v\}}\sqcup{\{v\}}.$ It follows that
$$
   \widetilde{H}_{i}(K'_{W})\cong\begin{cases}
   \widetilde{H}_{i}(K_{W-\{v\}})\oplus{\k},&\text{for $i=0;$}\\
   \widetilde{H}_{i}(K_{W-\{v\}}),&\text{for $i>0.$}
                                     \end{cases}
$$

\smallskip

\noindent\emph{Case 3:}  $W=V'-\{v\}=V.$

Then $K'_{W}$ is a triangulated $(n-1)$-disk and therefore
contractible.

\smallskip

\noindent\emph{Case 4:} $v\not\in{W},\; V(F)\subset{W},\;
W\neq{V}.$

We have
$$
   K_{W}=K'_{W}\cup{F},\quad
   K'_{W}\cap{F}=\partial{F},
$$
where $\partial{F}$ is the boundary of the facet $F$. Since
$\partial{F}$ is a triangulated ${(n-2)}$-sphere and~$F$ is a
triangulated $(n-1)$-disk, the Mayer--Vietoris homology sequence
implies that
$$
   \widetilde{H}_{i}(K'_{W})\cong\begin{cases}
   \widetilde{H}_{i}(K_{W}),&\text{for $i<n-2;$}\\
   \widetilde{H}_{i}(K_{W})\oplus{\k},&\text{for $i=n-2.$}
                                     \end{cases}
$$

\smallskip

\noindent\emph{Case 5:} $v\not\in{W},\; V(F)\not\subset{W}.$ In
this case we have $K'_{W}\cong{K_{W}}.$

In all cases we obtain
$$
   \widetilde{H}_{i}(K'_{W})\cong\widetilde{H}_{i}(K_{W-\{v\}})=0\quad
   \text{for }0<i<n-2,
$$
which finishes the proof by induction.
\end{proof}

Now the third formula of Theorem~\ref{maintr} follows from
Theorem~\ref{hoh} and Lemma~\ref{zero}.

The last statement of Theorem~\ref{maintr} follows
from~\cite[Cor.~8.19]{B-P}.
\end{proof}

For the sake of completeness we include the calculation of the
bigraded Betti numbers in the case $n=2$, that is, when $P$ is a
polygon.

\begin{prop}
If $P=\vc^{k}(\Delta^{2})$ is an $(k+3)$-gon, then
\begin{align*}
   &\beta^{-i,2(i+1)}(P)=i\binom{k+1}{i+1}+(k+1-i)\binom{k+1}{k+2-i},\\
   &\beta^{0,0}(P)=\beta^{-(k+1),2(k+3)}(P)=1,\\
   &\beta^{-i,2j}(P)=0,\quad\text{otherwise}.
\end{align*}
\end{prop}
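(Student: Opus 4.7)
The plan is to feed Hochster's formula (Theorem~\ref{hoh}) into the special structure of the dual complex of an $m$-gon, then finish with a short binomial computation. With $m = k+3$, the boundary of the dual polytope is the cyclic graph $C_m$, which I view as a $1$-dimensional simplicial complex on~$[m]$.

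The key observation is that for every proper subset $J \subsetneq [m]$ the full subcomplex $(C_m)_J$ is a disjoint union of arcs, hence a forest, while for $J = [m]$ it is the cycle $C_m \simeq S^1$ itself. Therefore $\widetilde{H}^r((C_m)_J)$ vanishes for $r \geq 1$ whenever $J \neq [m]$, and $\dim \widetilde H^0((C_m)_J) = c(J)-1$, where $c(J)$ is the number of arcs. Substituting this into Theorem~\ref{hoh} shows that every nonzero bigraded Betti number either sits on the diagonal $j = i+1$ or equals the single contribution $\beta^{-(k+1),2(k+3)}(P) = \dim\widetilde H^1(S^1) = 1$; together with the tautological $\beta^{0,0}(P)=1$, this already gives the second and third formulas in the proposition.

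For the first formula, Corollary~\ref{fact1} yields, for $1 \leq i \leq k+1$,
\[
  \beta^{-i,2(i+1)}(P) = \sum_{|J|=i+1}\bigl(c(J)-1\bigr).
\]
Since $(C_m)_J$ is a forest whenever $|J| \leq m-1$, the identity $c(J) = |J|-e(J)$ holds, where $e(J)$ is the number of cycle-edges of $C_m$ whose two endpoints both belong to $J$. A double count gives $\sum_{|J|=i+1} e(J) = m\binom{m-2}{i-1}$, each of the $m$ edges being contained in $\binom{m-2}{i-1}$ subsets of size $i+1$; consequently
\[
  \beta^{-i,2(i+1)}(P) = i\binom{m}{i+1} - m\binom{m-2}{i-1}.
\]

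The last step is to rewrite this in the stated form. I would expand $\binom{m}{i+1}$ by two applications of Pascal's rule as $\binom{m-2}{i+1} + 2\binom{m-2}{i} + \binom{m-2}{i-1}$ and then use the absorption identity $i\binom{m-2}{i} = (m-1-i)\binom{m-2}{i-1}$ to collect terms, producing $i\binom{k+1}{i+1} + (k+1-i)\binom{k+1}{k+2-i}$ after rewriting $\binom{k+1}{i-1}$ as $\binom{k+1}{k+2-i}$. I do not anticipate any real obstacle: the whole argument is elementary once $K_P$ is recognised as a cycle, and the only mildly technical point is the binomial simplification, which collapses in two lines.
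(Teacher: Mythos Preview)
Your argument is correct. The recognition that $K_P$ is the cycle $C_m$, the reduction of Hochster's formula to counting arcs in induced subgraphs, the double count of edges, and the binomial rearrangement all go through exactly as you describe; the restriction $|J|\le m-1$ that you impose before using $c(J)=|J|-e(J)$ is precisely what is needed, and the boundary values $i=0$ and $i=k+1$ indeed collapse to~$0$.

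This is a genuinely different route from the paper's. The paper does not actually prove the formula here: it cites \cite[Example~8.21]{B-P} and remarks that one could alternatively run the inductive Mayer--Vietoris machinery of Theorem~\ref{maintr} (i.e.\ repeatedly apply Lemma~\ref{betapp'} to pass from $\vc^{k-1}(\Delta^2)$ to $\vc^{k}(\Delta^2)$). Your approach bypasses the induction entirely by exploiting the one-dimensionality of $K_P$: every proper induced subcomplex is a forest, so the homological input to Hochster's formula reduces to a pure edge count. The inductive method has the advantage of fitting into the same framework as the $n\ge3$ case and of explaining why the two summands in the answer are the ``$\beta^{-i,2(i+1)}$'' and ``$\beta^{-i,2(i+n-1)}$'' pieces of Theorem~\ref{maintr} that collide when $n=2$; your method is more self-contained and gives a closed formula in one shot without any recursion.
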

\begin{proof}
This calculation was done in~\cite[Example~8.21]{B-P}. It can be
also obtained by a Mayer--Vietoris argument as in the proof of
Theorem~\ref{maintr}.
\end{proof}

\begin{coro}
The bigraded Betti numbers of truncation polytopes
$P=vc^{k}(\Delta^{n})$ depend only on the dimension and the number
of facets of $P$ and do not depend on its combinatorial type.
Moreover the numbers $\beta^{-i, 2(i+1)}$ do not depend on the
dimension~$n$.
\end{coro}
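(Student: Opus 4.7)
The proof is essentially an inspection of the formulas produced by Theorem~\ref{maintr}, and no new argument is needed. The plan is to turn those formulas into the two invariance statements directly.

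First I would observe that every nonzero bigraded Betti number of $P=vc^{k}(\Delta^{n})$ is given by one of the closed-form expressions in Theorem~\ref{maintr}: $\beta^{-i,2(i+1)}(P)=i\binom{k+1}{i+1}$ on the lower diagonal, $\beta^{-i,2(i+n-1)}(P)=(k+1-i)\binom{k+1}{k+2-i}$ on the upper diagonal, the boundary values $\beta^{0,0}(P)=\beta^{-(m-n),2m}(P)=1$, and zero elsewhere. Each right-hand side depends only on the indices $i,j$ together with $n$ and $k=m-n-1$; no quantity records the order or the choice of vertices cut during the iterated truncation producing the particular combinatorial type of $P$. This gives the first assertion: $\beta^{-i,2j}(P)$ is determined by the pair $(n,m)$ alone.

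For the second assertion, I would simply read off the identity $\beta^{-i,2(i+1)}(P)=i\binom{k+1}{i+1}$ and note that $n$ does not appear on the right-hand side. Hence, for a fixed number of cuts $k$, the numbers $\beta^{-i,2(i+1)}$ coincide for all truncation polytopes $vc^{k}(\Delta^{n})$ with $n\ge 3$.

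The only small subtlety worth flagging is the degenerate case $n=2$: there the two index series $j=i+1$ and $j=i+n-1$ collapse onto each other, and the preceding proposition shows that $\beta^{-i,2(i+1)}(P)$ acquires the extra summand $(k+1-i)\binom{k+1}{k+2-i}$. I would therefore indicate explicitly that the second assertion is intended in the range $n\ge 3$ covered by Theorem~\ref{maintr}. Beyond noting this scope restriction, there is no genuine obstacle, since all the combinatorial content has already been packaged into Theorem~\ref{maintr}.
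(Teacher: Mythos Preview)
Your proposal is correct and matches the paper's treatment exactly: the corollary is stated without proof as an immediate consequence of the explicit formulas in Theorem~\ref{maintr} (and the accompanying proposition for $n=2$), and your argument simply unpacks that implication. Your remark about the $n=2$ degeneracy is an appropriate clarification of scope that the paper leaves implicit.
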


The topological type of the corresponding moment-angle manifold
$\mathcal{Z}_P$ is described as follows:

\begin{theo}[{see \cite[Theorem 6.3]{B-M}}]
Let $P=\vc^{k}(\Delta^{n})$ be a truncation polytope. Then the
corresponding moment-angle manifold $\mathcal{Z}_P$ is
diffeomorphic to the connected sum of sphere products:
\[
  \mathop{\#}_{j=1}^{k}
  \bigl(S^{j+2}\times S^{2n+k-j-1}\bigr)^{\#j\binom{k+1}{j+1}},
\]
where $X^{\#k}$ denotes the connected sum of $k$ copies of~$X$.
\end{theo}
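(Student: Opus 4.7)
The plan is to proceed by induction on $k$, the number of vertex cuts. The base case $k=0$ is well known: for the simplex $P = \Delta^n$ we have $\mathcal{Z}_{\Delta^n} \cong S^{2n+1}$, which matches the right-hand side with the convention that the empty connected sum is a sphere of the ambient dimension. For the inductive step, assuming the result for $P_{k-1} = \vc^{k-1}(\Delta^n)$, I would pass to $P_k = \vc(P_{k-1})$ and describe the change in diffeomorphism type induced by a single vertex cut.

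The key ingredient is a geometric surgery description: for any simple $n$-polytope $P$ with $m$ facets, cutting a vertex $v$ corresponds, at the level of dual simplicial complexes, to replacing the maximal simplex $F_v \subset K_P$ by the simplicial join with a new vertex $w$, yielding $K_{\vc(P)} = K_P \cup (w*\partial F_v)$. Under the moment-angle functor this decomposition produces a Mayer--Vietoris style gluing $\mathcal{Z}_{\vc(P)} = \mathcal{Z}_P \cup_X Y$, where $X$ is a torus factor times a sphere and $Y$ is a polydisk piece; a careful analysis shows that this gluing realises an equivariant connected sum of $\mathcal{Z}_P$ with certain sphere products. For truncation polytopes specifically, one then needs to count, at the $k$th stage, how many new summands of each topological type $S^{j+2} \times S^{2n+k-j-1}$ are produced by a single vertex cut, and verify that these aggregate to the stated exponents $j\binom{k+1}{j+1}$.

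The combinatorial bookkeeping of the exponents is best verified by a consistency check against Theorem~\ref{maintr}: the Betti number contribution $\beta^{-j,2(j+1)}(P) = j\binom{k+1}{j+1}$ must match $b^{j+1}(\mathcal{Z}_P)$ produced by the connected sum, and dually $\beta^{-j,2(j+n-1)}(P) = (k+1-j)\binom{k+1}{k+2-j}$ must match the complementary factor $S^{2n+k-j-1}$ via Poincar\'e duality (Theorem~\ref{Poinc}). The binomial $\binom{k+1}{j+1}$ arises naturally as the number of $(j+1)$-subsets of the vertex set of $K_{P_k}$ minus the new vertex, and the extra factor $j$ reflects the number of distinguished "linking" vertices within each such subset, matching the number of distinct nontrivial cohomology generators produced at each stage.

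The main obstacle is passing from a cohomological or homotopical match to an honest diffeomorphism. Betti numbers alone are insufficient, since many non-diffeomorphic manifolds share them; what is really needed is a handle decomposition of $\mathcal{Z}_P$ whose handle-attaching pattern is demonstrably that of an iterated connected sum. One way to overcome this is by a Morse-theoretic argument on $\mathcal{Z}_P$ using the moment map or a perfect Morse function induced by the combinatorics of the vertex cuts, verifying that the handle attachments come in complementary pairs of indices $(j+2, 2n+k-j-1)$, which is precisely the signature of a connected sum $(S^{j+2} \times S^{2n+k-j-1})^{\#N}$. This is the approach followed by Bosio--Meersseman, and it is where the genuinely nontrivial geometry of the theorem resides.
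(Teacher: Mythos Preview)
The paper does not actually prove this theorem: it is stated with the attribution ``see \cite[Theorem~6.3]{B-M}'' and no argument is supplied; the only follow-up remark is that the Betti numbers of the displayed connected sum agree with the bigraded Betti numbers computed in Theorem~\ref{maintr} via~\eqref{bp}. So there is no ``paper's own proof'' to compare your proposal against.

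As for the proposal itself, it is an honest outline rather than a proof, and you say as much. The inductive framework on $k$ and the idea that a single vertex cut corresponds to a controlled surgery on $\mathcal Z_P$ are indeed the skeleton of the Bosio--Meersseman argument. But the decisive step --- showing that the surgery produced by a vertex cut is, up to diffeomorphism, exactly a connected sum with the appropriate sphere products --- is not carried out here; you defer it to ``a Morse-theoretic argument \ldots\ the approach followed by Bosio--Meersseman''. That is precisely the content of the theorem, so the proposal does not close the gap. Note also that the Betti-number consistency check you describe is what the present paper does observe after the statement, and as you correctly point out, it cannot by itself yield a diffeomorphism classification. If you want a self-contained proof, you must actually execute the handle/surgery analysis of a vertex cut (identifying the attaching maps and showing they are standard), not merely appeal to it.
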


It is easy to see that the Betti numbers of the connected sum
above agree with the bigraded Betti numbers of $P$,
see~\eqref{bp}.


\end{document}